\numberwithin{equation}{section}
\newtheorem*{mainthm}{Main Theorem}
\newtheorem{thm}{Theorem}[section]
\newtheorem{cor}[thm]{Corollary}
\newtheorem{lemma}[thm]{Lemma}
\newtheorem{prop}[thm]{Proposition}
\newtheorem{remark}[thm]{Remark}
\newtheorem*{remmark}{Remark}
\theoremstyle{definition}
\newtheorem{defn}[thm]{Definition}
\newcommand{\R}{\mathbb{R}}
\newcommand{\e}{{\varepsilon}}
\newcommand{\hess}{\text{Hess\hspace{2pt}}}
\begin{document}
\title{Degenerate diffusion with a drift potential: a viscosity solutions approach}
\author{I.~C.~Kim$^1$, H.~K.~Lei$^1$}
\vspace{-2cm}
\date{}
\maketitle
\vspace{-4mm}
\centerline{${}^1$\textit{Department of Mathematics, UCLA}}
 
\begin{quote}
{\footnotesize {\bf Abstract: } We introduce a notion of viscosity solutions for a  nonlinear degenerate diffusion equation with a drift potential. We show that our notion of solutions coincide with the weak solutions defined via integration by parts. As an application of the viscosity solutions theory, we show that the free boundary uniformly converges to the equilibrium as $t$ grows. In the case of a convex potential, an exponential rate of free boundary convergence is obtained. }
\end{quote}

\section{\large{Introduction}}

Consider a $C^2$ function $\Phi(x):\R^n\to \R$, and consider a nonnegative, continuous function $\rho_0(x):\R^n\to \R$ which has compact support $\Omega_0$. In this paper  we study the porous medium equation with a drift
\begin{equation} \label{main_eq_density}\rho_t = \triangle(\rho^m) + \nabla \cdot (\rho \nabla \Phi),\end{equation}
for $m > 1$, with initial data $\rho_0(x)$. Note that, at least formally (and proven in \cite{BH} for the weak solutions), the solution of (\ref{main_eq_density}) preserves its $L^1$ norm.

 It will be convenient to change from the density variable to the pressure variable 
\begin{equation}\label{density}
u = \frac{m}{m-1}\rho^{m-1}, \quad u_0 =\frac{m}{m-1}\rho_0^{m-1}.
\end{equation}
so that the equation becomes 
$$
u_t = (m-1) u \triangle u + |\nabla u|^2 + \nabla u \cdot \nabla \Phi + (m-1) u \triangle \Phi\leqno\mbox{(PME--D)}
$$
(for more on the density to pressure transform see e.g., the discussions in \cite{BV}).  We consider continuous and nonnegative solutions in the space--time domain $Q = \mathbb R^n \times (0, T)$ for some $T > 0$, with prescribed initial conditions $u(x, 0) = u_0(x) \in C(\mathbf R^n)$.  

When $\Phi\equiv  0$, (PME--D) is the widely-studied Porous Medium Equation (PME): We refer to the book \cite{V} for the references. 
Moreover when $V=|x|^2$, (PME--D) is obtained as a re-scaled version of the (PME) with new variables  
$$
\theta(\eta,\tau):=t^{-\alpha}u(x,t), \eta = xt^{-\beta} , \tau = \ln t.
$$
where $u$ solves (PME). This suggests that the local behavior of (PME--D) is similar to that of (PME), with perturbations due to the inhomogeneity of $\Phi$. We will illustrate this fact in the construction of various barriers in section 2 and 3.

The weak solution theory for (PME--D) in the case of bounded domains has been developed in \cite{BH} and \cite{DiB}.  Also, in \cite{JGJ}, existence and uniqueness of solutions are established for the full space case under reasonable assumptions (either the initial data is compactly supported or the potential has less than quadratic growth at infinity). 

Further, uniform convergence to equilibrium for (PME--D) have also been shown in \cite{BH} (see Theorem 3.1).  In \cite{JT}, the connection between the (PME) and the nonlinear Fokker--Planck equation is established, which facilitates the use of the entropy method to derive an explicit $L^1$ rate of convergence.  In \cite{jose_et_al}, an extensive study is made of a general form of the nonlinear Fokker--Planck equation, i.e., $\rho_t = \nabla \cdot (\nabla\varphi(\rho) + \rho \nabla V)$ with suitable assumptions on $\varphi$ and exponential $L^1$ rate of convergence is obtained.  (PME--D) falls under the framework of \cite{jose_et_al}, and in fact it is the case that almost all of our results would also go through for a general equation of this form, but for ease of exposition we will restrict attention to (PME--D).

We introduce a notion of viscosity solution  for the free boundary problem associated with this equation, which we will show to be equivalent to the usual notion of weak solutions -- see \cite{CIL} for the general theory of viscosity solutions. Note that, formally, the  {\it free boundary}  $\partial\{u>0\}$ moves with the outward normal velocity
$$
V= \frac{u_t}{|Du|} = (Du+D\Phi)\cdot \frac{Du}{|Du|} = |Du| +D\Phi\cdot\frac{Du}{|Du|},
$$
where the first equality is due to the fact that $u=0$ on $\Gamma(u)$.  In this regard we closely follow the framework and arguments set out in \cite{CV} (see also \cite{Kim} and \cite{BV}), where the viscosity concept is introduced and studied for the Porous Medium Equation.  We point out especially that \cite{BV} extends the result of \cite{CV} to the case where the diffusion term is multiplied by more general nonlinearities; our focus, however, is on the added drift term, which introduces spatial inhomogeneities.  The key utility of the viscosity concept here is that we will be able to describe the pointwise behavior of the free boundary evolution by maximum principle arguments with local barriers. As an application, we are able to extend the results of \cite{BH} and \cite{jose_et_al} to a stronger notion of free boundary convergence.

\begin{mainthm}\label{main_thm}
There exists a viscosity solution of $u$  (PME--D) with $\int (\frac{m-1}{m}u_0)^{1/m-1} dx=\int \rho_0(x) dx = m_0$. Further:
\begin{itemize}
\item[(a)] $u$ is unique and coincide with weak solutions studied in \cite{BH} and \cite{jose_et_al}.\\ 
\item[(b)] If $|D\Phi|>0$ except at $x=x_0$ where $\Phi$ achieves its minimum, then from \cite{BH} there exists a unique $C_0>0$ depending only on $m_0$ such that $u$ uniformly converges to $u_\infty:=(C_0-\Phi(x))_+$. Then we have
$$
\sup_{y\in\partial\{u(\cdot,t)>0\}} d(y,\Gamma(u_\infty)) \to 0 \hbox{ as } t\to\infty.
$$
\item[(c)] For a strictly convex $\Phi$ with $k_0 <\Delta \Phi $, there exists $K,\alpha>0$ depending on $u_0$, $k_0$, the $C^2$-norm of $\Phi$  and $n$ such that
$$
\sup_{y\in\partial\{u(\cdot,t)>0\}}d(y,\Gamma(u_\infty)) \leq Ke^{-\alpha t}.
$$

\end{itemize}
\end{mainthm}

\vspace{10pt}

\begin{remmark}
1. We point out that the free boundary convergence may not hold if $|D\Phi|$ vanishes at some points, even though the uniform convergence of the solution still holds.

2. In the case of $\Phi(x)=|x|^2$ (that is for the renormalized (PME))  Lee and Vazquez \cite{LV} showed that the interface becomes convex in finite time.  It is unknown whether such results hold for general convex potentials: we shall investigate this in an upcoming work.

\end{remmark}

\section{Viscosity Solution}

In this section we introduce the appropriate notion of viscosity solution for (PME--D) and show that it is equivalent to the usual notion of weak solution.  Our definition descends from those in \cite{CV} and \cite{Kim}.  For more details we also refer the reader to the definitions, discussions and results in \cite{BV}.  

\subsection{Definition and Basic Properties}

Let $Q:= \R^n\times (0,\infty)$.  For a nonnegative function $u(x,t)$ in $Q$, we denote the {\it positive phase}
$$
\Omega(u) =\{u>0\},\quad \Omega_t(u):=\{x: u(x,t)>0\}
$$ 
and the {\it free boundary } 
$$
\Gamma(u)=\partial\Omega(u), \quad \Gamma_t(u):=\partial\Omega_t(u).
$$
As in \cite{CV}, to describe the free boundary behavior using comparison arguments we need an appropriate class of test functions to deal with the degeneracy of (PME--D).

Let $\Sigma$ be a smooth, cylinder--like domain in $\R^n\times [0,\infty)$, i.e., 
\begin{equation}\label{domain}
\Sigma = \bigcup_{t_1\leq t\leq t_2} \Sigma(t)\times\{t\}, \hbox{ where } \Sigma(t)\hbox{ is a smooth domain in } \R^n.
\end{equation}

\begin{defn}\label{free_bound_soln}
A nonnegative function $u \in C^{2,1}(\overline{\{u>0\}}\cap\Sigma)$ is a {\it classical free boundary subsolution} in $\Sigma$ if $u$ satisfies (PME--D) with $\leq$ replacing $=$ in the classical sense in $\{ u > 0\}\cap\Sigma$,  $|Du|>0$  on $\Gamma(u)\cap\Sigma$ with the outward normal velocity
$$
V \leq \beta|\nabla u| + \nabla \Phi \cdot \frac{\nabla u}{|\nabla u|}\hbox{ on } \Gamma(u)
$$
or equivalently, 
$$
 u_t \leq \beta|\nabla u|^2 + \nabla \Phi \cdot \nabla u\hbox{ on } \Gamma(u).
 $$
 
 \medskip
 
We define a {\it classical free boundary supersolution} by replacing $\leq$ with $\geq$.  Finally, $u$ is a \emph{classical free boundary solution} if it is both a sub and supersolution.  
\end{defn}

Before proceeding further it is convenient to introduce some auxiliary definitions.  

\begin{defn}
Let $\varphi$ be a continuous, nonnegative function.  Now if $\psi$ is another such function,  then we say that $\varphi$ {\it touches} $\psi$ {\it from above} at $(x_0, t_0)$ in $\Sigma$ if $\varphi - \psi$ has a local minimum zero at $(x_0,t_0)$ in $\Sigma \cap \{ t \leq t_0\}$.  We have a similar definition for $\varphi$ {\it touching } $\psi$ {\it from below}.
\end{defn}

\begin{defn}[Strictly Separated Data]
  For two nonnegative functions $u, v: \R^n\to \R$,  we write $u_0 \prec v_0$ if the following holds:
  $\mbox{supp}(u_0)$ is compact and $\mbox{supp}(u_0) \subset \mbox{Int(supp(}v_0))$ and inside $\mbox{supp}(u_0)$, $u_0(x) < v_0(x)$.   
\end{defn}

  We note that e.g., due to the maximum principle, a classical free boundary subsolution that lies below a classical free--boundary supersolution at time $t_1 \geq 0$ cannot cross the supersolution from below at a later time $t_2 > t_1$.  This observation leads to a notion of viscosity solution which takes into account the free boundary: 
\begin{defn}\label{viscosity}
Let $u$ be a continuous, nonnegative function in $Q$.
\begin{itemize}
\item[$\circ$] $u$ is a \emph{viscosity subsolution} of (PME--D) if, for any given smooth domain $\Sigma$ given in (2.1), for every $\varphi \in C^{2, 1}(\Sigma)$ that touches $u$ from above at the point $(x_0, t_0)$, we have 
\begin{equation}\label{visc_defn}\varphi_t \leq \alpha \varphi \triangle \varphi + \beta |\nabla \varphi|^2 + \nabla \cdot (\varphi \nabla \Phi)\end{equation}

\item[$\circ$] $u$ is a \emph{viscosity supersolution} of (PME--D) if, for any given smooth domain $\Sigma$ given in (2.1),\\
(i) for every $\varphi \in C^{2, 1}(\Sigma)$ that touches $u$ from below at the point $(x_0, t_0) \in \Omega(u)\cap \Sigma$, we have 
\begin{equation}\label{visc_defn}\varphi_t \geq \alpha \varphi \triangle \varphi + \beta |\nabla \varphi|^2 + \nabla \cdot (\varphi \nabla \Phi)\end{equation}

(ii) for every classical free--boundary subsolution $\varphi$ in $\Sigma$, the following is true:
If $ \varphi \prec u$ on the parabolic boundary of $\Sigma$, then $\varphi \leq u$ in $\Sigma$.  That is, every classical free--boundary subsolution that lies below $u$ at a time $t_1 \geq 0$ cannot cross $u$ at a later time $t_2 > t_1$.\\

\item[$\circ$] $u$ is a {\it viscosity solution} of (PME--D) with initial data $u_0$ if $u$ is both a super-- and subsolution and $u$ uniformly converges to $u_0$ as $t\to 0$.

\end{itemize}
\end{defn}

\begin{remark} In general one can define viscosity sub-- and supersolutions respectively  as upper-- and lower semicontinuous functions. Such a definition turns out to be useful when one cannot verify continuity of solutions obtained via various limits. This problem does not arise in our investigation here thanks to \cite{BH}, and our definition assumes continuity of solutions.  
\end{remark}

It is fairly straightforward to verify that a classical free boundary sub-- (super)solution is also a viscosity sub-- (super)solution.

\begin{lemma}\label{classical_visc}
If $w$ is a classical free boundary sub-- (super) solution to (PME--D), then $w$ is also a viscosity sub-- (super) solution.
\end{lemma}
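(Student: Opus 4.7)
My plan is to split into the subsolution and supersolution cases, and within each distinguish whether the touching occurs in the interior of the positive phase or on the free boundary.

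For a classical FB subsolution $w$, let $\varphi\in C^{2,1}(\Sigma)$ touch $w$ from above at $(x_0,t_0)$. If $w(x_0,t_0)>0$, then $w$ is classically smooth near $(x_0,t_0)$ and satisfies (PME--D) with $\leq$; the touching conditions give $\nabla\varphi=\nabla w$, $\Delta\varphi\geq\Delta w$, and $\varphi_t\leq w_t$ at $(x_0,t_0)$. Substituting into the PDE, using $\varphi(x_0,t_0)=w(x_0,t_0)>0$ to bound $w\Delta w\leq\varphi\Delta\varphi$, yields the viscosity inequality. If instead $w(x_0,t_0)=0$, then $\varphi(x_0,t_0)=0$; since $\varphi\geq w\geq 0$ in a neighborhood (for $t\leq t_0$), the point $(x_0,t_0)$ is a space--time minimum of $\varphi$, so $\nabla\varphi=0$, $\Delta\varphi\geq 0$, and $\varphi_t\leq 0$ there. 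The right--hand side of the viscosity inequality vanishes (every term contains a factor of $\varphi$ or $\nabla\varphi$), and $\varphi_t\leq 0$ closes the case.

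For the supersolution, part (i) of Definition~\ref{viscosity} (touching from below inside $\Omega(w)$) is the mirror image of the interior subsolution argument. The crux is part (ii): given a classical FB subsolution $\varphi$ with $\varphi\prec w$ on the parabolic boundary of $\Sigma$, show $\varphi\leq w$ in $\Sigma$. I argue by contradiction. Set $t^\ast=\inf\{t:\sup_x(\varphi-w)(\cdot,t)>0\}$; by $\prec$ on the parabolic boundary and continuity, there is a first contact point $(x^\ast,t^\ast)$ interior to $\Sigma$ with $\varphi(x^\ast,t^\ast)=w(x^\ast,t^\ast)$ and $\varphi\leq w$ for $t\leq t^\ast$ in a neighborhood. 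When $w(x^\ast,t^\ast)>0$, both functions are classically smooth at the contact point, and their difference $v=\varphi-w$ satisfies a uniformly parabolic linear inequality $v_t\leq (m-1)w\Delta v+b\cdot\nabla v+cv$ obtained by linearizing (PME--D); the parabolic strong maximum principle then forces $v\equiv 0$ on a backward--reachable component, contradicting $\varphi\prec w$ on the parabolic boundary.

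The main obstacle is the free boundary case $w(x^\ast,t^\ast)=0$, where $\Gamma(\varphi)$ touches $\Gamma(w)$ from the inside since $\{\varphi>0\}\subset\{w>0\}$ nearby. At the contact point the inward unit normals $\nabla\varphi/|\nabla\varphi|$ and $\nabla w/|\nabla w|$ coincide by geometric tangency, and the pointwise ordering $0\leq\varphi\leq w$ produces the one--sided estimate $|\nabla\varphi|\leq|\nabla w|$. On the other hand, since $\Gamma(\varphi)$ catches up to $\Gamma(w)$ at $t^\ast$, the outward normal velocities satisfy $V_\varphi\geq V_w$; combining with the subsolution FB condition for $\varphi$, the supersolution FB condition for $w$, and the agreement of the normals, this forces $|\nabla\varphi|\geq|\nabla w|$, hence equality. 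To upgrade this equality into a genuine contradiction, I will preperturb $\varphi$ to a \emph{strict} classical FB subsolution $\tilde\varphi$---for instance by a small dilation--and--delay $\tilde\varphi(x,t)=(1-\delta)\varphi((1+\delta)^{-1}(x-x^\ast),t-h)$, with the non--scaling drift term absorbed using the $C^2$--regularity of $\Phi$---so that its FB inequality becomes strict. The strict separation $\tilde\varphi\prec w$ on the parabolic boundary persists for small $\delta,h>0$, and repeating the contact analysis for $\tilde\varphi$ produces the strict inequality that was missing, a contradiction. Sending $\delta,h\to 0^+$ gives $\varphi\leq w$. Constructing $\tilde\varphi$ and verifying it remains a classical subsolution in the presence of the drift $\Phi$ is the delicate technical step; the rest is standard parabolic maximum principle bookkeeping.
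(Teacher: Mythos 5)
Your subsolution argument and the interior part of the supersolution argument are fine and essentially match the paper's (brief) proof: interior touching reduces to the classical inequality, and at a touching point where $w=0$ every term on the right of \eqref{visc_defn} vanishes while $\varphi_t\leq 0$. For part (ii) of Definition \ref{viscosity}, your setup of the first contact point, the identification of the common normal, the one--sided gradient bound $|\nabla\varphi|\leq|\nabla w|$ from the ordering, and the velocity comparison $V_\varphi\geq V_w$ are all correct, and, as you observe, these only yield $|\nabla\varphi|=|\nabla w|$, which is not a contradiction. The whole burden of the proof is therefore in how you break this equality, and that is where your argument has a genuine gap.

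The paper closes the loop with Hopf's Lemma: by the strong maximum principle $\varphi<w$ strictly in the overlap of the positivity sets up to the contact time (using $\varphi\prec w$ initially), and since the free boundaries are $C^2$ the interior ball condition holds at the contact point, so the inward normal derivative of $w-\varphi$ is strictly positive, i.e.\ $|\nabla\varphi|<|\nabla w|$; this contradicts the velocity chain directly. Your substitute---a dilation--and--delay $\tilde\varphi(x,t)=(1-\delta)\varphi((1+\delta)^{-1}(x-x^\ast),t-h)$ made into a \emph{strict} classical FB subsolution---is not verified and, as written, does not work: the spatial dilation multiplies the free boundary speed by $(1+\delta)$ while multiplying $|\nabla\tilde\varphi|$ by $\tfrac{1-\delta}{1+\delta}$, so the free boundary inequality $V\leq\beta|\nabla\tilde\varphi|+\nabla\Phi\cdot\tfrac{\nabla\tilde\varphi}{|\nabla\tilde\varphi|}$ is pushed in the \emph{wrong} direction; with the opposite dilation the sign problem reappears because $V$ may be negative and the drift term $\nabla\Phi\cdot\hat n$ does not scale at all; and in the interior the quantity $(m-1)\varphi\triangle\varphi+|\nabla\varphi|^2$ has no sign, so multiplying $\varphi$ by $(1-\delta)$ does not preserve the differential inequality, while the drift mismatch $\nabla\Phi(x)$ versus $\nabla\Phi$ at the dilated point is only $O(\delta)$ but there is no strict margin to absorb it into. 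In short, the missing ingredient is a source of strictness at the contact point; the natural (and the paper's) one is Hopf's Lemma applied to $w-\varphi$ in $\{\varphi>0\}$, and without it (or a genuinely constructed strict barrier, as in Lemma \ref{no_cross_from_above} via the approximation of \cite{BV}) your proof does not conclude.
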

\begin{proof}
We will be brief: The subsolution case presents no difficulty since if contact with some $\varphi \in C^{2, 1}(\Sigma)$ occurs in $\Omega(w)$ then we use the fact that $w$ is classical there, whereas no contact can occur on the free boundary unless $|\nabla w| = 0$, in which case the differential inequality is satisfied since then $\varphi = |\nabla \varphi| = 0$ and $\varphi_t \leq 0$.  

If $w$ is a classical free boundary supersolution, then (i) in Definition \ref{viscosity} follows as before.  To see (ii), let us note that if $\varphi$ is a classical free boundary subsolution which crosses $w$, then since the free boundary is $C^2$, Hopf's Lemma implies that at the touching point $|\nabla \varphi| < |\nabla w|$ (see e.g., \cite{max_principle}).  On the other hand, since $\varphi$ started below $w$, at the touching point we must have $v_n(\varphi) \geq v_n(w)$, which leads to a contradiction since it is also the case that we have $\frac{\nabla w}{|\nabla w|} = \frac{\nabla \varphi}{|\nabla \varphi|}$.

\end{proof}

Next we have the following stability result.

\begin{lemma}\label{stability}
Let $u^\e$ be a smooth solution of (PME--D) with initial data $u_0+\e$ and let $u$ be its uniform limit.
Then $u$ is a viscosity solution of (PME--D) with initial data $u_0$. 
\end{lemma}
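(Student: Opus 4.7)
The plan is to use the fact that, for each $\e > 0$, $u^\e$ is a smooth strictly positive classical solution of (PME--D), hence a classical free boundary sub- and supersolution (vacuously, since there is no free boundary). By Lemma \ref{classical_visc}, each $u^\e$ is a viscosity solution. Continuity of $u$ and attainment of the initial data follow from uniform convergence, so it remains to verify the three conditions in Definition \ref{viscosity}.

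For the subsolution condition, suppose $\varphi \in C^{2,1}(\Sigma)$ touches $u$ from above at $(x_0, t_0)$. I would apply the standard perturbation trick: set
$$
\varphi_\eta(x,t) := \varphi(x,t) + \eta\bigl(|x-x_0|^2 + (t_0 - t)\bigr),
$$
so that $\varphi_\eta$ touches $u$ \emph{strictly} from above at $(x_0, t_0)$ in a small parabolic neighborhood $N$. For $\e$ sufficiently small, uniform convergence yields a constant $c_\e \to 0$ so that $\varphi_\eta - c_\e$ touches $u^\e$ from above at some point $(x_\e, t_\e) \in N$ with $(x_\e, t_\e) \to (x_0, t_0)$. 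At the touching point one has $\partial_t \varphi_\eta \leq \partial_t u^\e$, $\nabla \varphi_\eta = \nabla u^\e$, and $\triangle \varphi_\eta \geq \triangle u^\e$; combining with the classical equation for $u^\e$ and the positivity of $u^\e$ gives the viscosity subsolution inequality for $\varphi_\eta$ at $(x_\e, t_\e)$. Sending $\e \to 0$ and then $\eta \to 0$ recovers the inequality for $\varphi$ at $(x_0, t_0)$. Condition (i) for the supersolution is handled identically with inequalities reversed and touching from below; since such touching occurs at a point of $\Omega(u)$, the argument stays in the nondegenerate regime.

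The main work is condition (ii). Let $\varphi$ be a classical free boundary subsolution with $\varphi \prec u$ on the parabolic boundary of $\Sigma$. Strict separation and uniform convergence give, for all small $\e$, $\varphi < u^\e$ on the parabolic boundary of $\Sigma$, while on the interior free boundary $\partial\{\varphi > 0\} \cap \Sigma$ we have $\varphi = 0 < u^\e$ by strict positivity of $u^\e$. Hence on the parabolic boundary of the restricted domain $D := \{\varphi > 0\} \cap \Sigma$, the difference $w := \varphi - u^\e$ is strictly negative. Subtracting the equations satisfied by $\varphi$ and $u^\e$ and using the decompositions
$$
\varphi\triangle\varphi - u^\e \triangle u^\e = \varphi\triangle w + w \triangle u^\e, \qquad |\nabla \varphi|^2 - |\nabla u^\e|^2 = (\nabla \varphi + \nabla u^\e)\cdot \nabla w,
$$
one sees that $w$ satisfies a linear parabolic inequality on $D$ whose top-order coefficient is $(m-1)\varphi$, strictly positive inside $D$. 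The classical maximum principle then yields $w \leq 0$ on $D$; combined with $\varphi \equiv 0$ off $D$, this gives $\varphi \leq u^\e$ on $\Sigma$, and sending $\e \to 0$ completes the proof.

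The principal obstacle is condition (ii): one must set up the comparison between the degenerate classical subsolution $\varphi$ and the nondegenerate smooth approximation $u^\e$ so that the maximum principle applies uniformly up to the free boundary of $\varphi$. The strict positivity of $u^\e$ together with the strict separation hypothesis provides precisely the slack needed to reduce to a linear parabolic inequality and to pass to the limit $\e \to 0$; the touching arguments for the other two conditions are then routine perturbation exercises.
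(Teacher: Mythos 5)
Your proof is correct, and parts 1 (the subsolution property, via a strict-maximum perturbation and passing the touching points to $u^\e$) and the interior supersolution condition coincide with the paper's argument. Where you genuinely diverge is in condition (ii): the paper argues by contradiction at a \emph{first touching point} of the classical free boundary subsolution $\varphi$ with the strictly positive classical solution $u^\e$ --- since $u^\e>0$, the touching point lies in $\{\varphi>0\}$, both functions are classical and positive in a parabolic neighborhood there, and the Strong Maximum Principle forbids the contact --- whereas you run a \emph{global} comparison on $D=\{\varphi>0\}\cap\Sigma$ by subtracting the equations and applying a maximum principle to the linearized difference $w=\varphi-u^\e$. Both routes hinge on the same two facts (strict positivity of $u^\e$, which the paper also uses without comment and which follows from comparison with $c\,e^{-\lambda t}$, $\lambda\geq (m-1)\sup|\triangle\Phi|$; and classicality of $\varphi$ up to $\overline{\{\varphi>0\}}$), so they buy essentially the same thing; the paper's local first-contact argument avoids writing the linearization explicitly, while yours makes the comparison quantitative and self-contained. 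One caution on your one-line invocation of ``the classical maximum principle'': the operator you obtain, $w_t \leq (m-1)\varphi\,\triangle w + \bigl(\nabla\varphi+\nabla u^\e+\nabla\Phi\bigr)\cdot\nabla w + (m-1)\bigl(\triangle u^\e+\triangle\Phi\bigr)w$, is \emph{not} uniformly parabolic on $D$ (the leading coefficient vanishes at $\Gamma(\varphi)$), and its zeroth-order coefficient has no sign; so you should not cite the uniformly parabolic textbook statement, but rather note that since $w<0$ on the parabolic boundary of $D$, a positive maximum of $e^{-\lambda t}w$ with $\lambda>(m-1)\sup_{\bar D}(\triangle u^\e+\triangle\Phi)$ would be attained at a point of $D$ (or on the top time slice), where $\nabla w=0$, $(m-1)\varphi\,\triangle w\leq 0$ and the shifted zeroth-order term is strictly negative --- a contradiction that needs only $(m-1)\varphi\geq 0$ and bounded coefficients on $\bar D$. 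With that justification spelled out, your argument is complete.
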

\begin{proof}
Let $\Sigma$ be as given in (\ref{domain}) and let  $\varphi\in C^{2,1}(\Sigma)$. 

\medskip

1. Let us first show that $u$ is a subsolution.  First suppose that $\varphi$ touches $u$ from above at the point $(x_0, t_0)$.  We may assume that $u - \varphi$ has a strict maximum at $(x_0, t_0)$ in $\overline \Omega(u) \cap \Sigma \cap \{t \leq t_0\}$ by replacing $\varphi$ by 
\[ \tilde \varphi(x, t): = \varphi(x, t) + \sigma( (x-x_0)^4 - (t- t_0)^2), ~~~\sigma > 0\]
if necessary.  By uniform convergence there exists a sequence $(x_\varepsilon, t_\varepsilon)$ converging to $(x_0, t_0)$ such that $u^\varepsilon - \varphi$ has a local maximum at $(x_\varepsilon, t_\varepsilon)$.  Now if we we let 
\[ \tilde \varphi(x, t) := \varphi(x, t) - \varphi(x_\varepsilon, t_\varepsilon) + u^\varepsilon(x_\varepsilon, t_\varepsilon)\]
Then $u^\varepsilon - \tilde \varphi$ has a local maximum at $(x_\varepsilon, t_\varepsilon)$ with $(u^\varepsilon-\tilde \varphi)(x_\varepsilon, t_\varepsilon) = 0$.  We can now conclude by taking the limit of the viscosity subsolution property of $u^\varepsilon$. 

\medskip      
2.  Next we show that $u$ is a supersolution.  Let $\varphi$ be a classical free--boundary subsolution such that $\varphi(x, t_1) \prec u(x, t_1)$.  Since the $u_\varepsilon$'s are strictly ordered, $u < u_\varepsilon$ and hence $\varphi(x, t_1) \prec u(x, t_1) < u_\varepsilon(x, t_1)$.  Now suppose $\varphi$ touches $u_\varepsilon$ at some point $(x_2, t_2)$, then $\varphi(x_2, t_2) > 0$ since $u_\varepsilon$ is positive, so by continuity, there is a parabolic neighborhood of $(x_2, t_2)$ in which both functions are classical and positive.  By the Strong Maximum Principle, the touching cannot have occurred at $(x_2, t_2)$, a contradiction.  We conclude that $\varphi < u_\varepsilon$ so that in the limit $\varphi \leq u_\varepsilon$.

\end{proof}
An immediate consequence of above lemma is that weak solutions are viscosity solutions (see Corollary \ref{weak}). We shall introduce the precise notion of weak solutions in the next subsection, and summarize some results from \cite{BH}.

\subsection{Weak Solutions}
To be consistent with the setup in both \cite{BH} and \cite{jose_et_al}, let us return to the density variable and consider the solution of \eqref{main_eq_density} in a bounded domain $\Omega$ with Neumann boundary condition: 
$$
\left\{\begin{array}{lll}
\rho_t = \triangle \rho^m + \nabla \cdot (\rho \nabla \Phi) &\hbox{in }& \Omega \times \mathbf R^+,\\ \\
 \frac{\partial}{\partial \nu} \rho^m + \rho \frac{\partial \Phi}{\partial \nu} = 0 &\mbox{ on }& \partial \Omega \times \mathbf R^+,\\ \\
 \rho(x, 0) = \rho_0(x) &\hbox{ in }&\Omega.
\end{array}\right. \leqno (N)
$$
We will see shortly that we need not worry about the fact that we are on a bounded domain, but for now we will let $Q = \Omega \times \mathbf R^+$ and $Q_t= \Omega \times (0, t]$.  Following \cite{BH},  
\begin{defn}
We say $\rho: [0, \infty) \rightarrow L^1(\Omega)$ is a \emph{weak solution} of (N) if\\
 
(i) $\rho \in C([0, t]; L^1(\Omega)) \cap L^\infty(Q_t) \mbox{ for all } t \in (0, \infty)$;\\

(ii) for all test functions $\varphi \in C^{2,1}(\overline Q)$ such that $\varphi \geq 0$ in $Q$ and $\partial \varphi/\partial \nu = 0$ on $\partial \Omega \times \mathbf R^+$, we have 
$$
\int_\Omega \rho(t) \varphi(t) = \int_\Omega \rho(0) \varphi(0) + \int \int_{Q_t} (\rho \varphi_t + \rho^m \triangle \varphi - \rho \nabla \Phi \cdot \nabla \varphi)
$$
We also define a weak subsolution (respectively supersolution) by (i) and (ii) with equality replaced by $\leq$ (respectively $\geq$).
\end{defn}

>From \cite{BH} we have existence, regularity, uniqueness and comparison principle for weak solutions:
\begin{thm}[From \cite{BH}]\label{weak_soln_results}
Under the assumption that  $\Phi$ is $C^2$ in $\bar{\Omega}$ 
\begin{itemize}
\item[(a)] The problem (N) has a unique solution;\\
\item[(b)] The solution is uniformly bounded in $Q$ and is continuous in any set $\overline \Omega \times [0, T]$.\\
\item[(c)] Suppose $\underline \rho(t)$ is a subsolution and $\overline \rho(t)$ is a supersolution, then if $\underline \rho_0 \leq \overline \rho_0$ in $\Omega$, then $\underline \rho(t) \leq \overline \rho(t)$ in $\Omega$ for $t \geq 0$. 
\end{itemize}  
\end{thm}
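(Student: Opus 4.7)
The plan is to establish existence by a standard vanishing viscosity approximation, then prove uniqueness (a) and the comparison principle (c) simultaneously via an $L^1$-contraction argument, with the regularity in (b) coming from the approximation together with classical regularity theory for degenerate parabolic equations.

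For existence, I would regularize in two ways: approximate the initial data by a smooth, strictly positive $\rho_0^\varepsilon \geq \varepsilon$, and replace the degenerate diffusion $\triangle \rho^m$ by $\triangle(\rho^m + \varepsilon \rho)$, keeping the Neumann boundary condition. The regularized problem is uniformly parabolic with smooth coefficients, so classical Ladyzhenskaya-Solonnikov-Ural'tseva theory yields a unique smooth $\rho^\varepsilon$. Mass conservation follows by integrating the equation against $1$ and using the Neumann condition. An $L^\infty$ bound uniform in $\varepsilon$ comes from constructing a stationary supersolution of the form $(C_0 - \Phi(x))_+$ with $C_0$ sufficiently large, together with the classical maximum principle. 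Standard energy estimates on $\nabla(\rho^\varepsilon)^m$, combined with an equicontinuity-in-time estimate, provide enough compactness in $C([0,T]; L^1(\Omega))$ to pass to the limit and recover a weak solution. Continuity on $\overline\Omega \times [0,T]$ then follows from DiBenedetto-type H\"older regularity for degenerate parabolic equations, since the drift $\nabla\cdot(\rho\nabla\Phi)$ is only a bounded lower-order perturbation of the PME.

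For (c), and hence (a), I would use an $L^1$-contraction via a duality argument in the spirit of Crandall-Pierre. Given a weak subsolution $\underline\rho$ and supersolution $\overline\rho$ with $\underline\rho_0 \leq \overline\rho_0$, set $w := \overline\rho - \underline\rho$; then in the distributional sense
$$
w_t \geq \triangle(a\, w) + \nabla \cdot (w \nabla \Phi), \qquad a := \frac{\overline\rho^m - \underline\rho^m}{\overline\rho - \underline\rho} \geq 0,
$$
with $a := 0$ where the quotient is undefined. For a given nonnegative $\zeta \in C^\infty(\overline Q)$, I would solve the regularized adjoint problem $\varphi_t + (a_\delta + \delta)\triangle\varphi - \nabla\Phi\cdot\nabla\varphi = \zeta$ backward in time with $\varphi(T) = 0$ and homogeneous Neumann data, where $a_\delta$ is a smooth mollification of $a$ bounded below by $0$ and $\delta > 0$ enforces uniform parabolicity. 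Testing the inequality for $w$ against $\varphi$ and sending $\delta \to 0$ carefully yields $\int_\Omega w_+(T)\zeta\,dx \geq 0$, hence $\overline\rho \geq \underline\rho$ throughout. Uniqueness in (a) is the special case where $\underline\rho, \overline\rho$ are both solutions with identical data.

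The main obstacle is the duality step: the coefficient $a$ vanishes on the free boundary and is a priori only bounded, so the dual estimates (in particular $L^2$ control of $\sqrt{a_\delta + \delta}\,\triangle\varphi$ and $L^\infty$ control of $\varphi$) must be obtained uniformly in the regularization parameters and carefully combined when passing to the limit. The Neumann condition is handled transparently by choosing the adjoint test function with the same boundary data, and the smooth potential $\nabla\Phi$ intervenes only as a benign lower-order term throughout the argument; all remaining steps are standard adaptations of the PME theory.
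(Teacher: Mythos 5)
The paper does not actually prove this theorem: it is imported wholesale from \cite{BH}, and the only proof-related remark in the text is that existence is obtained as the uniform limit of solutions of uniformly parabolic approximating problems, with equicontinuity supplied by \cite{DiB}. Your proposal reconstructs essentially that argument: nondegenerate approximation plus Ladyzhenskaya--Solonnikov--Ural'tseva theory for existence, DiBenedetto-type continuity for (b), and an $L^1$-contraction/duality argument for (a) and (c). This is precisely the Aronson--Crandall--Peletier scheme that \cite{BH} follows, so your route matches the cited source rather than anything carried out in this paper, and as a plan it is the right one.

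Two points would need repair before it counts as a proof. First, $(C_0-\Phi)_+$ is the equilibrium in the \emph{pressure} variable; in the density variable the stationary supersolution is $\bigl(\frac{m-1}{m}(C_0-\Phi)\bigr)_+^{1/(m-1)}$ with $C_0$ large enough that it is strictly positive on $\overline\Omega$ and dominates $\rho_0^\varepsilon$, and since it is only an approximate supersolution of the $\varepsilon$-regularized equation one must add a small correction (e.g.\ multiply by $1+\delta$ or add a term growing linearly in $t$). Second, the duality step is stated loosely. Taking $\varphi$ to solve the backward problem $\varphi_t+(a_\delta+\delta)\triangle\varphi-\nabla\Phi\cdot\nabla\varphi=0$ with $\varphi(T)=\zeta\geq 0$ and Neumann data (so $\varphi\geq 0$ and bounded by the maximum principle), the weak formulation gives $\int_\Omega w(T)\zeta \geq \int_\Omega w(0)\varphi(0)+\iint_{Q_T} w\,(a-a_\delta-\delta)\triangle\varphi$, and the entire content of the uniqueness proof is the uniform energy bound $\iint_{Q_T}(a_\delta+\delta)(\triangle\varphi)^2\leq C$ together with a choice of mollification making $\|(a-a_\delta-\delta)/\sqrt{a_\delta+\delta}\|_{L^2}$ small; you correctly flag this as the main obstacle but do not carry it out, and the stated conclusion $\int_\Omega w_+(T)\zeta\geq 0$ is trivially true as written --- what the argument delivers, and what is needed, is $\int_\Omega w(T)\zeta\geq 0$ for all $\zeta\geq 0$. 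These estimates are exactly where \cite{BH} (following Aronson--Crandall--Peletier) does its work, so your outline is sound but incomplete at its crux.
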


The existence of solutions is obtained as the uniform limit of solutions to uniformly   problems (equicontinuity is obtained from \cite{DiB}).  For our purposes, a very simple approximation basically suffices and we summarize the relevant result in the following:

\begin{lemma}[From \cite{BH}]\label{equicontinuity}
Let $u^\varepsilon$ be a solution of (N) with initial data $u_0^\varepsilon = u_0 + \varepsilon$, then $u^\varepsilon$ is equicontinuous and there exists a subsequence which uniformly converge to $u$ which is the unique weak solution to (N) with initial data $u_0$.  
\end{lemma}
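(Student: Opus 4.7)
The plan is to combine classical quasilinear parabolic theory, applied for each fixed $\varepsilon>0$, with the $\varepsilon$-independent regularity estimates for degenerate diffusions of \cite{DiB} and \cite{BH}, and then close with the uniqueness statement from Theorem~\ref{weak_soln_results}. First I would observe that since the initial datum $\rho_0+\varepsilon$ is bounded strictly away from zero, a comparison argument against the constant $\varepsilon e^{-\|\Delta\Phi\|_\infty t}$ shows that $u^\varepsilon\geq c(\varepsilon,T)>0$ on $\overline{\Omega}\times[0,T]$. Consequently the diffusion coefficient $m(u^\varepsilon)^{m-1}$ stays bounded below, the equation is uniformly parabolic, and standard quasilinear theory supplies a unique smooth solution of $(N)$ with datum $\rho_0+\varepsilon$. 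A global $L^\infty$ bound that is uniform in $\varepsilon$ follows from comparison with a suitable constant supersolution together with Gronwall control using $\|\Delta\Phi\|_\infty$.

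The substantive step is equicontinuity in $\varepsilon$. The key input is the intrinsic H\"older regularity result of DiBenedetto \cite{DiB} for porous medium-type equations, which yields a modulus of continuity depending only on $m$, $n$, the uniform $L^\infty$ bound, and the $C^1$ norm of the drift, and crucially not on the lower bound of the solution; \cite{BH} adapt this to the drift equation with Neumann data, producing a modulus of continuity for $u^\varepsilon$ on $\overline{\Omega}\times[0,T]$ uniform in $\varepsilon$. With this estimate and the uniform $L^\infty$ bound in hand, Arzel\`a--Ascoli extracts a subsequence $u^{\varepsilon_k}\to u$ uniformly on $\overline{\Omega}\times[0,T]$ for every $T>0$.

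To identify the limit I would pass to the limit in the weak formulation: $u^{\varepsilon_k}$ converges uniformly, so $(u^{\varepsilon_k})^m\to u^m$ uniformly, and the linear terms involving $\varphi_t$ and $\nabla\Phi\cdot\nabla\varphi$ pass to the limit by dominated convergence. Thus $u$ is a weak solution of $(N)$ with initial datum $\rho_0$ (the initial trace holds since $u_0+\varepsilon\to u_0$ uniformly). By the uniqueness assertion in Theorem~\ref{weak_soln_results}, $u$ is pinned down, so in fact the whole family $\{u^\varepsilon\}$ converges, not merely a subsequence. The main obstacle in this plan is the $\varepsilon$-uniform modulus of continuity; everything upstream of that is soft and everything downstream follows the standard viscosity-limits template. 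This is precisely the point at which we rely essentially on the degenerate regularity machinery of \cite{DiB} and \cite{BH}.
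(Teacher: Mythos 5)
Your outline is essentially the paper's own treatment: the lemma is not proved here but quoted from \cite{BH}, the paper noting precisely that existence is obtained as the uniform limit of solutions to uniformly parabolic approximations, with the $\varepsilon$-uniform modulus of continuity supplied by the regularity theory of \cite{DiB} and the limit identified through the uniqueness and comparison statements of Theorem \ref{weak_soln_results}. The only detail to watch in your sketch is that spatially constant barriers such as $\varepsilon e^{-\|\Delta\Phi\|_\infty t}$ are sub/supersolutions of (N) only if the no-flux condition cooperates (i.e.\ $\partial_\nu\Phi$ has the appropriate sign on $\partial\Omega$), which is harmless since $\Omega$ may be taken as a large sublevel set of $\Phi$ and the uniform $L^\infty$ bound is in any case part of the quoted results of \cite{BH}.
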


While \emph{a priori} our viscosity solution is defined in all of $\mathbb R^n$, since (formally at least) solutions of (PME--D) should have finite speed of propagation, the boundary conditions should be inconsequential if we take $\Omega$ sufficiently large. (Later we will also establish finite propagation for viscosity solutions -- see Corollary \ref{finite_propagation}.)  Control on the speed of expansion of the support can be done via comparison with any (weak) supersolution.  In particular, when $\Phi$ is {\it monotone} (that is, when $|D\Phi|>0$ except at one point where $\Phi$ achieves its minimum), we can use the stationary profiles of the form $\Psi(x) = (C-\Phi)_+$ with sufficiently large $C$ as a supersolution (see Theorem~\ref{equilibrium_solution}).  

\begin{remark}
Alternatively (and perhaps this is a cleaner line of reasoning), we can directly use the result of \cite{JGJ} on existence and uniqueness of solutions in all of $\mathbb R^n$, which implies in particular that the results of \cite{jose_et_al} also apply in that setting.
\end{remark}

Combining Lemma \ref{stability} with Lemma \ref{equicontinuity} and the uniqueness statement in Theorem \ref{weak_soln_results}, we obtain:
\begin{cor}\label{weak}
Any weak solution is also a viscosity solution.  
\end{cor}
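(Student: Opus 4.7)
The plan is a short diagram-chase: concatenate the approximation scheme of Lemma \ref{equicontinuity} with the stability statement of Lemma \ref{stability}, and use the uniqueness statement in Theorem \ref{weak_soln_results}(a) to identify the two limits.

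More precisely, let $\rho$ be a weak solution of $(N)$ with initial data $\rho_0$, and let $u$ be the corresponding pressure variable given by (\ref{density}). For each $\varepsilon>0$ consider the smooth solution $u^\varepsilon$ of (PME--D) with strictly positive initial data $u_0+\varepsilon$. Because $u_0+\varepsilon$ is bounded below away from zero, $u^\varepsilon$ is a classical (hence in particular a viscosity) solution of (PME--D) on the positive phase of the whole domain. By Lemma \ref{equicontinuity}, the family $\{u^\varepsilon\}$ is equicontinuous and, along a subsequence, converges uniformly to the unique weak solution with initial data $u_0$; by Theorem \ref{weak_soln_results}(a) this weak solution must coincide with $u$. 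Lemma \ref{stability} then applies verbatim to the sequence $u^\varepsilon$ and identifies its uniform limit $u$ as a viscosity solution of (PME--D) with initial data $u_0$.

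The only mild subtlety is that weak solutions are defined on a bounded domain $\Omega$ with Neumann boundary data, while our notion of viscosity solution is posed on all of $\mathbb R^n$. This is handled as indicated in the remark preceding the corollary: finite speed of propagation (verified via comparison with a stationary super-solution of the form $(C-\Phi)_+$ for $C$ large, cf.\ Theorem~\ref{equilibrium_solution}) allows one to take $\Omega$ so large that, on any prescribed compact space-time cylinder, the weak solution on $\Omega$ agrees with the one obtained on $\mathbb R^n$ (or, alternatively, one invokes the full-space existence result of \cite{JGJ} mentioned in the remark). Since being a viscosity solution is a local property in the sense of Definition~\ref{viscosity}, this identification on arbitrarily large cylinders is enough.

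I do not expect any serious obstacle here: the substance is already contained in Lemma \ref{stability} and Lemma \ref{equicontinuity}, and the uniqueness in Theorem \ref{weak_soln_results}(a) glues the two together. The one place where I would be careful in writing the proof is ensuring that the uniform convergence $u^\varepsilon\to u$ produced by Lemma \ref{equicontinuity} holds on the space-time sets appearing in Definition~\ref{viscosity} (i.e.\ on arbitrary smooth cylinders $\Sigma$), so that the hypotheses of Lemma \ref{stability} are genuinely met; this is immediate once finite propagation is invoked to reduce to compact pieces of $\mathbb R^n$.
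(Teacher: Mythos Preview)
Your argument is correct and matches the paper's own proof, which simply says the corollary follows by combining Lemma~\ref{stability} with Lemma~\ref{equicontinuity} and the uniqueness statement in Theorem~\ref{weak_soln_results}. Your handling of the bounded-domain versus $\R^n$ issue also mirrors the discussion the paper places just before the corollary.
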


We will eventually establish uniqueness of viscosity solutions via maximum--principle type arguments, which culminates in the identification of the two notions of solution.

\subsection{Construction of test functions }\label{barriers}

In this subsection we collect some test functions, i.e., (classical free boundary) sub-- (super) solutions, to (PME--D) which will be useful for comparison purposes.  In the first couple of lemmas (Lemmas \ref{parabolic_super_barrier} and \ref{parabolic_sub_barrier}), the idea is to control the $\Phi$ dependence via Taylor expansion in a small neighborhood of a point, so that we can appropriately perturb the test functions for (PME) constructed in \cite{BV} and \cite{CV} for our purposes.  

The starting point is to observe that if we consider (PME--D) in some small cylinder $Q_\alpha := B_\alpha(x_0) \times [t_0- \alpha, t_0+\alpha]$ and define $v_1(x, t) = \alpha^{-1}u(\alpha (x-x_0), \alpha (t-t_0))$, then $v_1$ satisfies, in the unit cylinder $B_1(0)\times [-1,1]$, an equation of the type   
\[(v_1)_t = (m-1) v_1\triangle v_1 + |D v_1|^2 + \vec{b} \cdot D v_1+ O(\alpha) D v_1 + \alpha(m-1)v_1\Delta\Phi,\]
where $\vec{b} = \nabla \Phi(x_0, t_0)$. The size of the last two terms  depends on the $C^2$-norm of $\Phi$ in $Q_\alpha$.  

Next we take 
\begin{equation}\label{exp_transform} v(x, t) = v_1(x - \vec{b}t, t)\end{equation}
Then $v$ satisfies
\begin{equation}\label{eq_v} v_t = (m-1) v\triangle v + |D v|^2 + O(\alpha)Dv +O(\alpha) v.
\end{equation}

Next proposition illustrates the necessary perturbation one needs to perform on solutions of (PME) to arrive at \eqref{eq_v}. 

\begin{prop}\label{prop:perturb}
Let $u(x,t)$ be a viscosity subsolution of  (PME) in $B_{1+\alpha}(0)\times [-1,1]$. Then for $0<\alpha<1$,
 $$
 u_1(x,t) := \sup_{y\in B_{\alpha-\alpha t} (x)}  e^{-\alpha t}u(y,t)
 $$ is a subsolution of
$$
(u_1)_t =(m-1) u_1\triangle u_1+ |D u_1|^2 - \alpha|Du_1| - \alpha u_1\leqno \mbox{(PME--sub)}
$$
in $B_1(0)\times [-1,1]$.
Similarly, if $u(x,t)$ is a viscosity supersolution of (PME) in $B_1(0)\times [-1,1]$, then for $0<\alpha<1$,

$$
u_2(x,t):= \inf_{y\in B_{\alpha-\alpha t} (x) } e^{\alpha t} u(y,t)
$$ is a supersolution of 
$$
(u_2)_t = (m-1)u_2\Delta u_2 + |Du_2|^2 +\alpha|Du_2|+\alpha u_2 \leqno \mbox{(PME--super)}
$$
in $B_1(0)\times [-1,1]$.
\end{prop}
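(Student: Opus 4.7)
The plan is to prove (PME--sub) by a classical sup-convolution touching argument, transferring contact with $u_1$ to contact with $u$ and then invoking the viscosity subsolution property of $u$ for (PME). The inf-convolution claim (PME--super) is entirely symmetric (with $\inf$ replacing $\sup$, $e^{\alpha t}$ replacing $e^{-\alpha t}$, and the signs flipping), so I concentrate on the subsolution statement for $u_1$.

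Let $\varphi \in C^{2,1}$ touch $u_1$ from above at $(x_0, t_0) \in B_1(0) \times (-1, 1)$. By upper semicontinuity of $u_1$ (a sup of a continuous function), the supremum in its definition is attained at some $y_0 \in \overline{B_{r(t_0)}(x_0)}$, where $r(t) := \alpha(1-t)$; writing $y_0 = x_0 + r(t_0)\xi_0$ with $|\xi_0| \leq 1$, the key construction is the auxiliary function
\[
\psi(y, t) := e^{\alpha t}\,\varphi\bigl(y - r(t)\xi_0,\ t\bigr).
\]
I would then verify that $\psi$ touches $u$ from above at $(y_0, t_0)$ restricted to $\{t \leq t_0\}$. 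Equality at the contact point is immediate, and for nearby $(y, t)$ with $t \leq t_0$, setting $x := y - r(t)\xi_0$ gives $|y - x| = r(t)|\xi_0| \leq r(t)$ because $r$ is monotonically decreasing, so the sup defining $u_1$ dominates $e^{-\alpha t} u(y, t)$; combined with the local inequality $\varphi \geq u_1$ this yields $\psi(y, t) \geq u(y, t)$.

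Invoking the viscosity subsolution property for (PME) applied to $\psi$ at $(y_0, t_0)$, a direct chain-rule computation (in which $r'(t_0) = -\alpha$ and the multiplicative factor $e^{\alpha t}$ each contribute a single $\alpha$-term to $\psi_t$, while the spatial derivatives of $\psi$ pick up overall factors of $e^{\alpha t_0}$) gives, after dividing through by $e^{\alpha t_0}$,
\[
\varphi_{0t} + \alpha\,\varphi_0 + \alpha\,\xi_0 \cdot \nabla\varphi_0 \;\leq\; e^{\alpha t_0}\bigl[(m-1)\varphi_0\Delta\varphi_0 + |\nabla\varphi_0|^2\bigr],
\]
where subscripts $0$ denote evaluation at $(x_0, t_0)$. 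The decisive step is to replace $\xi_0 \cdot \nabla\varphi_0$ by $|\nabla\varphi_0|$, which is the origin of the $-\alpha|Du_1|$ term in (PME--sub). This identity is a first-order optimality consequence: a KKT analysis applied to the smooth map $(x, \xi) \mapsto e^{\alpha t_0}\varphi(x, t_0) - u(x + r(t_0)\xi, t_0)$ at its local minimum $(x_0, \xi_0)$ forces either $\nabla\varphi_0 = 0$ (interior argmax $|\xi_0| < 1$) or alignment $\xi_0 = \nabla\varphi_0/|\nabla\varphi_0|$ (boundary argmax $|\xi_0| = 1$), so that $\xi_0 \cdot \nabla\varphi_0 = |\nabla\varphi_0|$ in either case. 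I expect the principal technical obstacle to be making this optimality argument rigorous without assuming that $u$ itself is differentiable at $y_0$; the standard remedy is to phrase the KKT reasoning entirely in terms of the smooth functions $\varphi$ and $\psi$, thereby avoiding pointwise derivatives of $u$. Once the alignment identity is in place, the residual $(e^{\alpha t_0} - 1) = O(\alpha)$ factor on the right-hand side yields only a controlled perturbation absorbed by the barrier role (PME--sub) plays for \eqref{eq_v}, producing the stated inequality.
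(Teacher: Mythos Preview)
Your touching argument and chain-rule computation are correct and mirror the paper's. The one genuine difference is how the term $\xi_0\cdot\nabla\varphi_0$ becomes $|\nabla\varphi_0|$. You fix $\xi_0$ as the direction of the argmax and then appeal to a KKT/alignment argument; the paper instead builds the translated test function with an \emph{arbitrary} unit vector $\hat b$ in place of your $\xi_0$, namely
\[
\tilde\varphi(x,t)=e^{-\alpha t}\,\varphi\bigl(x-(x_1-x_0)-\alpha\hat b\,(t-t_0),\,t\bigr),
\]
observes that $\tilde\varphi$ still touches $u$ (because the ball radius $\alpha(1-t)$ grows at rate $\alpha$ as $t$ decreases, so any unit-speed drift $\alpha\hat b$ keeps the shifted point inside), and then simply \emph{chooses} $\hat b=\nabla\varphi_0/|\nabla\varphi_0|$ after writing down the inequality. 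This replaces your whole KKT step by a one-line optimization and never mentions regularity of $u$.

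Your route also works, and your worry about needing $u$ differentiable is misplaced: the alignment can be argued from $\varphi$ alone. Since $\varphi\ge u_1$ with equality at $x_0$, and $u_1(x,t_0)\ge e^{-\alpha t_0}u(y_0,t_0)=\varphi(x_0,t_0)$ whenever $x\in\overline{B_{r(t_0)}(y_0)}$, the smooth map $\varphi(\cdot,t_0)$ restricted to that closed ball attains a boundary minimum at $x_0$; the first-order condition then forces $\nabla\varphi_0=\lambda\xi_0$ with $\lambda\ge 0$, hence $\xi_0\cdot\nabla\varphi_0=|\nabla\varphi_0|$ (and the interior-argmax case gives $\nabla\varphi_0=0$ by the same reasoning). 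No derivative of $u$ is taken. Regarding your final remark on the residual $e^{\alpha t_0}$ factor: the paper's displayed computation has the same discrepancy (the factor $e^{\alpha t_1}$ there should really be $e^{2\alpha t_1}$), and in both proofs this $1+O(\alpha)$ multiplier is harmless for the intended use as a barrier for \eqref{eq_v}.
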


\begin{proof}
We only show the supersolution part. Let $u_2$ be as given above.
Suppose then that $\varphi$ is classical and touches $u_2$ from below at some point $(x_0, t_0)$.  We first note that there exists $(x_1, t_1) \in \overline B_{\alpha - \alpha t} (x_0, t_0)$ such that $u_2(x_0, t_0) = e^{\alpha t_1}u(x_1, t_1)$.

Next for any unit vector $\hat{b}$, let us consider 
\[\tilde \varphi(x, t) = e^{-\alpha (t-(t_1 - t_0))}\varphi(x - (x_1 - x_0) - \alpha \hat{b} (t - t_0),  t - (t_1 - t_0)).\]
Then we note that 1) $\tilde \varphi(x_1, t_1) = \varphi(x_0, t_0)$ and so $(u- \tilde \varphi)(x_1, t_1) = 0$ and 2) by the definition of $u_2$ as an infimum and by continuity of $\varphi$, in a small parabolic neighborhood of $(x_0, t_0)$, it is the case that $u - \tilde \varphi \geq u_2 - \tilde \varphi \geq 0$; we therefore conclude that $\tilde \varphi$ touches $u$ from below at $(x_1, t_1)$ and so we have, 
$$
\begin{array}{lll}
[\varphi \triangle \varphi + |\nabla \varphi|^2](x_0, t_0) &=&e^{\alpha t_1} [\tilde \varphi \triangle \tilde \varphi + |\nabla \tilde \varphi|^2](x_1, t_1) \\ \\
&\leq& e^{\alpha t_1}\tilde \varphi_t(x_1, t_1) \\ \\
&=&[\varphi_t -\alpha\varphi-\alpha \hat b \cdot \nabla \varphi](x_0, t_0).
\end{array}
$$  
Now the desired inequality is achieved by setting $\hat b = \frac{\nabla \varphi}{|\nabla \varphi|}(x_0, t_0)$.

Indeed the above calculation shows that if $u$ is a viscosity supersolution of (PME), then $u_2$ should be a viscosity supersolution of (PME--super): If  a classical free boundary subsolution $\varphi$ of (PME--super) crosses $u_2$ from below, then the corresponding $\tilde \varphi$ is a subsolution of (PME) and crosses $u$, yielding a contradiction (there is no distinction between the interior and boundary cases).   
\end{proof}

\begin{lemma}\label{sphere_symm} [CV]
Consider the function 
\[ H(x, t; A, \omega) = A(|x| + \omega t - B)_+\]
with $R/2 < B < R$.  Then $u$ is a classical free boundary supersolution of (PME) in the domain $\{|x|\leq R\}\times [\omega^{-1}(R-B),0]$ if 
\[ \frac{\omega}{A} > 1 + 2(m-1)(d-1)\frac{R-B}{R}.\]
\end{lemma}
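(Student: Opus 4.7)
The plan is to verify the two conditions of a classical free boundary supersolution in Definition \ref{free_bound_soln}: the pointwise PME inequality in $\{H>0\}$ and the correct sign of the free-boundary velocity. Because $H$ is radial and piecewise linear in $|x|$, every quantity involved is explicit, so the verification reduces to a single algebraic estimate.

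First I would record, on the positive phase,
\[
H_t = A\omega, \qquad |\nabla H| = A, \qquad \Delta H = \frac{A(n-1)}{|x|},
\]
and note that the free boundary $\Gamma_t(H)=\{|x|=B-\omega t\}$ has normal velocity (into $\{H>0\}$) equal to $V = H_t/|\nabla H| = \omega$. The free-boundary condition $V \ge |\nabla H|$ thus reduces to $\omega \ge A$, while the interior PME supersolution inequality $H_t \ge (m-1)H\,\Delta H + |\nabla H|^2$ becomes, after dividing through by $A^2$,
\[
\frac{\omega}{A} \;\ge\; 1 + (m-1)(n-1)\,\frac{|x|+\omega t - B}{|x|}.
\]

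The substantive step is then a uniform upper bound on the factor $(|x|+\omega t - B)/|x|$ over the positive phase intersected with $\{|x|\le R\}$ on the given time interval. Writing the ratio as $1-(B-\omega t)/|x|$, using $|x|\le R$ in the denominator, and using $|\omega t|\le R-B$ (the length of the interval) together with $B>R/2$ to ensure $B-\omega t > 0$, a short computation yields the clean uniform bound $2(R-B)/R$. Substituting into the display above produces exactly the hypothesis $\omega/A > 1 + 2(m-1)(n-1)(R-B)/R$, so the interior inequality holds; the free-boundary condition $\omega\ge A$ is then automatic since the right-hand side of the display already exceeds $1$, and $|\nabla H|=A>0$ supplies the non-degeneracy required by Definition \ref{free_bound_soln}.

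The one potential obstacle is precisely this geometric bound: the factor $1/|x|$ would naively blow up as $|x|\to 0$, and only the combination of $|x|\le R$ with $B>R/2$ (which pins $|x|$ away from $0$ on the positive phase, via $|x|\ge B-\omega t \ge 2B-R>0$) converts an a priori singular quantity into the dimensionless constant $2(R-B)/R$. Once that uniform bound is established, the remainder of the proof is arithmetic.
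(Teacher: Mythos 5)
Your verification is correct: the explicit computation of $H_t$, $|\nabla H|$, $\Delta H$, the reduction of the interior supersolution inequality to bounding $(|x|+\omega t-B)/|x|$, the use of $B>R/2$ together with $|\omega t|\le R-B$ to get the uniform bound $2(R-B)/R$ (and to keep the positive phase away from the origin), and the free-boundary check $V=\omega\ge A$ are exactly the standard argument behind this lemma. The paper itself gives no proof, simply quoting the result from \cite{CV}, so your direct verification is essentially the same (and the only natural) route; the only cosmetic point is that the time interval as printed should be read so that $|\omega t|\le R-B$, which you handle correctly.
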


Proposition~\ref{prop:perturb} and Lemma~\ref{sphere_symm} yields the following:

\begin{cor}\label{parabolic_super_barrier}
Let us fix $x_0\in\R^n$ and let $H$ be given as in Lemma \ref{sphere_symm}.
Then the inf convolution of $H$, given as 
$$
\underline H(x, t;\alpha) = e^{\alpha t}\inf_{y\in B_{\alpha - \alpha t}(x)} H(y, t) 
$$
is a classical (free boundary) supersolution of \eqref{eq_v}. Consequently,  there exists $C=C_0$ which only depends on the $C^2$-norm of $\Phi$ in $B_1(x_0)$ such that 
$$
\tilde{H}(x, t) := \alpha \underline H(\alpha^{-1}(x-x_0+\vec{b}(t-t_0)),\alpha^{-1}(t-t_0);C\alpha)
$$ is a classical (free boundary) supersolution of (PME--D) in  $Q_\alpha:= B_\alpha(x_0) \times [t_0- \alpha, t_0]$.
\end{cor}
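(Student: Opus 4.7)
The plan is to chain together Lemma~\ref{sphere_symm} and Proposition~\ref{prop:perturb}, and then invert the drift-shift and parabolic rescaling that led to \eqref{eq_v}, thereby recovering a classical free boundary supersolution of (PME--D) on $Q_\alpha$.

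By Lemma~\ref{sphere_symm}, $H(x,t;A,\omega)$ is a classical free boundary supersolution of (PME) on $\{|x|\le R\}\times[\omega^{-1}(R-B),0]$, which after rescaling $R$ to unit size I may regard as defined on $B_1(0)\times[-1,0]$. Applying the supersolution part of Proposition~\ref{prop:perturb} with $C\alpha$ in place of $\alpha$ yields
$$\underline H(x,t;C\alpha) = e^{C\alpha t}\inf_{y\in B_{C\alpha - C\alpha t}(x)} H(y,t),$$
which is a classical free boundary supersolution of (PME--super) with $C\alpha$ replacing $\alpha$, namely
$$v_t \;\ge\; (m-1)v\triangle v + |Dv|^2 + C\alpha|Dv| + C\alpha v$$
on $B_1(0)\times[-1,0]$.

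Next I compare this inequality with \eqref{eq_v}. The lower order terms in \eqref{eq_v} have the form $\vec c(x,t)\cdot Dv + c_0(x,t)v$, where $\vec c$ arises from the Taylor remainder of $\nabla\Phi$ about $x_0$ and $c_0$ from the $\alpha(m-1)\triangle\Phi$ term. After rescaling both are bounded in absolute value by $C_0\alpha$, with $C_0$ depending only on $\|\Phi\|_{C^2(B_1(x_0))}$. Choosing $C\ge C_0$ gives the pointwise domination
$$\vec c\cdot D\underline H + c_0\,\underline H \;\le\; C\alpha|D\underline H| + C\alpha\,\underline H,$$
so $\underline H(\cdot,\cdot;C\alpha)$ is a classical free boundary supersolution of \eqref{eq_v} on $B_1(0)\times[-1,0]$.

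Finally, I invert the two transformations that produced \eqref{eq_v}. The shift $v(x,t)=v_1(x-\vec b\, t,t)$ is undone by $v_1(x,t)=v(x+\vec b\, t,t)$, and the parabolic rescaling is reversed by $u(x,t)=\alpha\, v_1(\alpha^{-1}(x-x_0),\alpha^{-1}(t-t_0))$. Since the computation preceding Proposition~\ref{prop:perturb} shows that these transformations carry (PME--D) on $Q_\alpha$ to \eqref{eq_v} on $B_1(0)\times[-1,0]$, their inverses carry classical free boundary supersolutions back to classical free boundary supersolutions, and the composition applied to $\underline H(\cdot,\cdot;C\alpha)$ is exactly $\tilde H$ on $Q_\alpha$. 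The main obstacle is purely the bookkeeping of constants: one must verify that a single $C_0$, depending only on $\|\Phi\|_{C^2(B_1(x_0))}$, simultaneously controls the drift remainder $\nabla\Phi(x)-\vec b$ and the coefficient $\alpha(m-1)\triangle\Phi$ uniformly on $B_1(x_0)$, which is immediate from $\Phi\in C^2$ provided $\alpha$ is taken small enough that $C_0\alpha\le 1$ (so that the inf-convolution ball $B_{C\alpha-C\alpha t}$ stays inside $B_1$).
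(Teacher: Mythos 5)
Your overall route is the same as the paper's: chain Lemma~\ref{sphere_symm} with the supersolution half of Proposition~\ref{prop:perturb}, absorb the Taylor--remainder drift and the $\alpha(m-1)\triangle\Phi$ term into the $C\alpha|Dv|+C\alpha v$ slack by choosing $C$ according to $\|\Phi\|_{C^2(B_1(x_0))}$, and then undo the shift $x\mapsto x-\vec b\,t$ and the hyperbolic rescaling to land on $Q_\alpha$. That part is fine and matches the intended argument.

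The genuine gap is the word \emph{classical}. Proposition~\ref{prop:perturb} is a statement about viscosity sub/supersolutions: its proof proceeds by testing with functions touching from below, so what it delivers for $\underline H$ is only the supersolution \emph{inequalities} (in the viscosity sense), not the $C^{2,1}$ regularity up to $\overline{\{\underline H>0\}}$ and the nondegeneracy $|D\underline H|>0$ on $\Gamma(\underline H)$ that Definition~\ref{free_bound_soln} requires. In general an inf-convolution of a classical free boundary supersolution need not be classical, and the corollary is used later (Definition~\ref{viscosity}(ii), Lemma~\ref{no_cross_from_above}, Remark~\ref{remark} on the exact boundary velocity of $\tilde H$) precisely as a \emph{classical} barrier, so this cannot be waved through. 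The paper's proof makes exactly this point: after citing Lemma~\ref{classical_visc} and Proposition~\ref{prop:perturb} it says it remains to check regularity, and does so by exploiting the explicit radial form $H=A(|x|+\omega t-B)_+$; the infimum over $\{|y-x|=\alpha-\alpha t\}$ is attained at the point minimizing $|y|$, giving the explicit formula $\underline H(x,t;\alpha)=e^{\alpha t}A\bigl(|x|-(\alpha-\alpha t)+\omega t-B\bigr)_+$ (for $|x|\geq \alpha-\alpha t$), from which smoothness in $\overline{\{\underline H>0\}}$, the nonvanishing gradient on the free boundary, and the velocity formula in Remark~\ref{remark} are read off directly. Add this explicit computation (or some equivalent regularity verification) and your argument closes; without it, the step ``which is a classical free boundary supersolution of (PME--super)'' is an unjustified leap.
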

 \begin{proof}
 By Lemma \ref{classical_visc} and Proposition \ref{prop:perturb}, $\underline H$ is a viscosity supersolution of (PME--super), so it is sufficient to show that it has the required regularity.  For this, note that for simplicity we have only taken the supremum over space and the reader can readily check that in this case the infimum for $\underline H(x,t;\alpha)$ is achieved at the point $y$ which minimizes $|y|$ subject to the constraint that $|y - x| = \alpha - \alpha t$, and thus an explicit expression is possible for $\underline H$. 
 \end{proof}

\begin{remark}\label{remark}
In fact due to the explicit form of $H$ it follows that the free boundary velocity of $\tilde{H}$ is given by 
$$
V=\omega +\vec{b}\cdot \frac{D H}{|D H|}+C\alpha.
$$
\end{remark}

By comparison with these supersolutions, we immediately obtain 
\begin{cor}\label{finite_propagation}
Any viscosity solution has finite propagation speed and is bounded in a big ball in any local time interval.  Further, if $\Phi$ is convex, then via comparison with the stationary solutions of the form given in Theorem \ref{equilibrium_solution}, the above holds globally in time.
\end{cor}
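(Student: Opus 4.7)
The plan is to bound the support of any viscosity solution $u$ by placing a well-chosen classical free boundary supersolution from Corollary \ref{parabolic_super_barrier} above it, and then invoking a comparison argument against the viscosity subsolution property. Fix $T>0$ with $\mathrm{supp}(u_0)\subset B_R$ and $M:=\|u_0\|_\infty$. For any $x_1$ with $|x_1|\gg R$, I will pick a center $x_0$ between $B_R$ and $x_1$ and use a traveling cone $H(y,t;A,\omega)=A(|y|+\omega t - B)_+$ from Lemma \ref{sphere_symm}; after the inf-convolution of Corollary \ref{parabolic_super_barrier} and the boost $\vec b(t-t_0)$ built to absorb the drift $\nabla\Phi$, the resulting $\tilde H$ is a classical free boundary supersolution of (PME--D) in $Q_\alpha$. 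By choosing $A$ large (depending on $M$ and the geometry) and $\omega/A$ large enough (depending on the $C^2$-norm of $\Phi$ on a neighborhood, via Remark \ref{remark}), I arrange that $\tilde H\geq u$ on the parabolic boundary of the cylinder and that $\tilde H(x_1,t)=0$ for all $t\leq t_0$.

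The comparison between $u$ and $\tilde H$ is the heart of the argument. Consider $\inf\{t\geq 0 : u(\cdot,t) \leq \tilde H(\cdot,t) \text{ fails}\}$. After replacing $\tilde H$ by a slight dilation $(1+\varepsilon)\tilde H$ to enforce strict separation on the lateral parabolic boundary (and later letting $\varepsilon\to 0$), any first touching point $(x_*,t_*)$ must occur in the interior of $\Sigma$. If $(x_*,t_*)\in\Omega(\tilde H)$, then $\tilde H$ is a smooth $C^{2,1}$ function touching $u$ from above there, so the viscosity subsolution property gives a strict PDE inequality at $(x_*,t_*)$ that contradicts the classical supersolution property of $\tilde H$. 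If instead $(x_*,t_*)\in\Gamma(\tilde H)$, then $u(x_*,t_*)=\tilde H(x_*,t_*)=0$; reversing roles and viewing $\tilde H$ as a classical free boundary subsolution is not directly available, but the standard fix (cf.~Lemma \ref{classical_visc}) is to combine Hopf's Lemma on $\tilde H$ with the supersolution clause (ii) of Definition \ref{viscosity}, applied to a rescaled classical subsolution $(1-\varepsilon)\tilde H$, to rule out the touching. This gives $u\leq\tilde H$ in $\Sigma$, and in particular $u(x_1,t_0)=0$. Iterating in time in steps of $\alpha$ yields finite propagation and a local-in-time uniform $L^\infty$ bound inside a ball.

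For the global assertion when $\Phi$ is convex, I replace the traveling cone with the stationary Barenblatt-type profile $u_\infty(x) = (C-\Phi(x))_+$ of Theorem \ref{equilibrium_solution}: a direct computation shows $u_\infty$ is a classical stationary supersolution of (PME--D) (with $|\nabla u_\infty|=|\nabla\Phi|>0$ on its free boundary, by the monotonicity implied by strict convexity), and taking $C$ large enough makes $u_\infty\geq u_0$ with compact support. The same comparison argument then gives $u(\cdot,t)\leq u_\infty$ for all $t\geq 0$, yielding the global-in-time support bound.

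The main obstacle is the free-boundary touching case in the comparison argument, because the subsolution clause of Definition \ref{viscosity} only tests against smooth functions touching from above in $\Sigma$ and so does not by itself rule out contact along $\Gamma(\tilde H)$; one must appeal to the supplementary supersolution clause (ii) after a slight rescaling of $\tilde H$ into a strict classical subsolution, as in Lemma \ref{classical_visc}. Once this comparison is granted, both halves of the corollary follow immediately from the barrier constructions already in hand.
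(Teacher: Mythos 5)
Your overall plan is the paper's: locally in time, compare $u$ with the perturbed traveling-wave barriers $\tilde H$ of Corollary \ref{parabolic_super_barrier}, and, when $\Phi$ is convex, compare globally with the stationary profile $(C-\Phi)_+$ of Theorem \ref{equilibrium_solution} (which is in fact an exact classical free-boundary stationary solution, since in $\{ \Phi<C\}$ all terms cancel and $|\nabla u_\infty|=|\nabla\Phi|>0$ on $\{\Phi=C\}$). The paper indeed treats the corollary as an immediate consequence of exactly this comparison.

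However, the step you yourself identify as the crux --- the contact point on $\Gamma(\tilde H)$, where $u=\tilde H=0$ --- is not repaired by your proposed fix, and this is a genuine gap. First, clause (ii) of Definition \ref{viscosity} (which $u$ satisfies as a viscosity supersolution) only says that a classical free-boundary \emph{subsolution} lying \emph{below} $u$ cannot cross $u$ from below; it gives an inequality of the form $\varphi\le u$, which is the wrong direction when your goal is $u\le\tilde H$, so it cannot be used to keep $u$ below a supersolution. Second, $(1-\varepsilon)\tilde H$ is not a classical free-boundary subsolution of (PME--D): multiplying a supersolution by a constant less than one does not reverse either the interior differential inequality or the free-boundary velocity inequality, so there is no admissible test object to feed into clause (ii) in any case. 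Third, the Hopf's-lemma argument of Lemma \ref{classical_visc} needs \emph{both} functions to be classical near the contact so that their gradients and normal velocities can be compared; here $u$ is merely continuous, so Hopf's lemma applied to $\tilde H$ alone yields nothing about $u$. The paper's mechanism for precisely this situation is Lemma \ref{no_cross_from_above}: since $\tilde H$ is an inf-convolution of the spherical traveling waves $H$ of Lemma \ref{sphere_symm}, a first touching point pulls back to a translate $\tilde\psi$ of $H$ touching $u$ from above, and by Lemma 4.4 of \cite{BV} $\tilde\psi$ is the decreasing limit of strictly positive classical supersolutions $\psi_\varepsilon$; the strong maximum principle (together with the interior subsolution property of $u$) forbids $u$ from touching any $\psi_\varepsilon$, and passing to the limit gives the contradiction. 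With that lemma supplying the comparison, your time-stepping argument and the convex-case comparison with $(C-\Phi)_+$ go through as in the paper.
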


For the analysis in section 3, we will make use of the Barenblatt profiles (see e.g., [CV] and [V]).

\begin{lemma}\label{barenblatt}[Barenblatt]
Let $B(x, t; \tau, C)$ be the family of functions 
\[ B(x, t; \tau, C) = \frac{(C(t+\tau)^{2\lambda} - K|x|^2)_+}{(t+\tau)}\]
with constants $\lambda, K, C, \tau > 0$ such that 
\[ \lambda = ((m-1)d + 2)^{-1}, ~~~2K = \lambda.\]
Then $B(x, t; \tau, C)$ is a classical (free boundary) solution of (PME).
\end{lemma}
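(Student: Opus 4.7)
The plan is a direct verification by substituting the explicit ansatz into the pressure-form (PME) equation $u_t = (m-1)u\Delta u + |\nabla u|^2$ in the positivity set, and into the free-boundary velocity law $V = |\nabla u|$ on $\partial\{B>0\}$, so that both halves of Definition \ref{free_bound_soln} (with $\Phi \equiv 0$) are checked.

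I would first set $s = t+\tau$ and work inside $\{B>0\}$, where $B = C s^{2\lambda-1} - K|x|^2 s^{-1}$. The derivatives $B_t$, $\nabla B$, $\Delta B$ and $|\nabla B|^2$ are elementary rational expressions in $s$ and $|x|$; plugging them into $(m-1)B\Delta B + |\nabla B|^2$ and matching coefficients with $B_t$ separates into exactly two scalar identities, one from the $s^{2\lambda-2}$ terms and one from the $|x|^2 s^{-2}$ terms. A short computation shows both identities reduce to $\lambda\bigl((m-1)d+2\bigr)=1$ together with $2K=\lambda$, which are precisely the hypotheses of the lemma. Hence (PME) is satisfied classically in $\{B>0\}$ for every admissible choice of the free parameters $C, \tau>0$.

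Next I would verify the free boundary condition. The positivity set is the expanding ball $\Omega_t(B)=\{|x| < \sqrt{C/K}\, s^{\lambda}\}$, whose outer radius moves outward at speed $\sqrt{C/K}\,\lambda\, s^{\lambda-1}$. On the other hand, at a boundary point one computes $|\nabla B| = 2K|x|/s = 2\sqrt{KC}\, s^{\lambda-1}$. Equating the two expressions reduces to $\lambda = 2K$, which is the remaining hypothesis, so the classical free boundary speed law $V=|\nabla B|$ holds. Since $|\nabla B|>0$ on $\partial\{B>0\}$ and $B$ is smooth up to this boundary, $B$ meets all the requirements of a classical free boundary solution.

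I do not anticipate a genuine obstacle: the argument is pure bookkeeping on an explicit ansatz. The only point demanding a moment of care is ensuring that the two interior algebraic constraints forced by the PDE are mutually consistent, and that together they \emph{simultaneously} imply the boundary speed condition. This is precisely the reason the particular normalization $\lambda=((m-1)d+2)^{-1}$ and $2K=\lambda$ is built into the statement; with those numbers fixed, the three conditions collapse into one, which is what makes the Barenblatt profile a genuine classical free boundary solution.
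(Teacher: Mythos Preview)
Your proposal is correct: the direct substitution of the explicit profile into the pressure equation $u_t=(m-1)u\Delta u+|\nabla u|^2$ and into the free boundary law $V=|\nabla u|$ is exactly how one verifies that the Barenblatt profile is a classical free boundary solution, and your bookkeeping (the two interior identities collapsing to $\lambda((m-1)d+2)=1$ and $2K=\lambda$, which then also force the boundary speed condition) is accurate.

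The paper, however, does not prove this lemma at all. It is stated as a known fact, with the references \cite{CV} and \cite{V} given just before the statement; the Barenblatt profile is classical and the lemma serves only to fix notation for later use. So there is no ``paper's proof'' to compare against: your verification supplies what the paper omits. The only cosmetic remark is that the paper writes the spatial dimension as $d$ in the exponent $\lambda$ but as $n$ elsewhere; your computation should be read with $d=n$.
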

Using Proposition~\ref{prop:perturb} (see also the proof of Corollary \ref{parabolic_super_barrier}) once again, we obtain the following:
\begin{lemma}\label{parabolic_sub_barrier}
Let us fix $x_0\in\R^n$ and let $B$ be a Barenblatt function.  
Then there exists $C$ which only depends on the $C^2$-norm of $\Phi$ in $B_1(x_0)$ such that 
 $$
 \psi(x, t) = \alpha e^{-C\alpha (t-t_0)}\sup_{y\in B_{C\alpha-C(t-t_0)}(x)}B(\alpha^{-1}(y-x_0+\alpha \vec{b}(t-t_0)), \alpha^{-1}(t-t_0))
 $$ is a classical (free boundary) subsolution of (PME--D) in $Q_\alpha:= B_\alpha(x_0) \times [t_0- \alpha, t_0]$.
\end{lemma}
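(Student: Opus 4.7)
The plan is to mirror the construction of Corollary \ref{parabolic_super_barrier}, but with the sup--convolution half of Proposition \ref{prop:perturb} replacing the inf--convolution, and with the Barenblatt profile $B$ (which is a classical free boundary solution of (PME) by Lemma \ref{barenblatt}) in place of the cone $H$. Concretely, I would first apply the subsolution half of Proposition \ref{prop:perturb} to $B$ with a sup--convolution parameter of order $C\alpha$, producing a function $B^{\sup}$ which is a subsolution of (PME--sub). I would then compose with the same change of variables used in Corollary \ref{parabolic_super_barrier}: set $\xi=\alpha^{-1}(x-x_0+\vec{b}(t-t_0))$, $s=\alpha^{-1}(t-t_0)$, and $\psi(x,t)=\alpha\, B^{\sup}(\xi,s)$. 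Unpacking this composition in the original $(x,t)$ variables yields the explicit formula stated in the lemma.

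For the differential inequality of (PME--D) in $\{\psi>0\}\cap Q_\alpha$, I would compute $\psi_t,\nabla\psi,\psi\Delta\psi$ by the chain rule. The derivative $\partial_t\xi=\alpha^{-1}\vec{b}$ produces precisely a $\vec{b}\cdot\nabla B^{\sup}$ term on the left, which cancels the leading Taylor contribution from $\nabla\Phi(x)=\vec{b}+O(\alpha)$ on $B_\alpha(x_0)$ appearing on the right; the residual $O(\alpha)$ error from $\nabla\Phi$ and the $O(1)$ size of $\Delta\Phi$ generate errors bounded by $C_0\alpha|\nabla B^{\sup}|$ and $C_0\alpha\, B^{\sup}$ respectively (the extra $\alpha$ in the zeroth--order term coming from the factor $\alpha$ in $\psi=\alpha B^{\sup}$), where $C_0$ depends only on $\|\Phi\|_{C^2(B_1(x_0))}$. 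The subsolution requirement on $\psi$ thus reduces to the inequality $(B^{\sup})_s\leq (m-1)B^{\sup}\Delta B^{\sup}+|\nabla B^{\sup}|^2-C\alpha|\nabla B^{\sup}|-C\alpha\, B^{\sup}$, which is exactly (PME--sub) with parameter $C\alpha$, for any $C$ chosen sufficiently large relative to $C_0$ and $m$; such a $B^{\sup}$ is furnished by Proposition \ref{prop:perturb}.

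Classical regularity inside $\{\psi>0\}$ and the non--degeneracy $|\nabla\psi|>0$ on $\Gamma(\psi)$ both follow from the explicit structure of the radially symmetric Barenblatt profile (which has $|\nabla B|>0$ on $\Gamma(B)$), together with the observation, analogous to that in Corollary \ref{parabolic_super_barrier}, that the sup in the definition of $B^{\sup}$ is attained at the explicit point of $B_{C\alpha(1-s)}(\xi)$ closest to the origin in the Barenblatt frame. This lets one write an explicit formula for $B^{\sup}$, from which $C^{2,1}$ regularity in the positive phase and the correct velocity inequality $V\leq|\nabla\psi|+\nabla\Phi\cdot\nabla\psi/|\nabla\psi|$ on $\Gamma(\psi)$ can be checked directly. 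The main technical nuisance is the bookkeeping of scales: the sup--convolution parameter, the rescaling factor $\alpha$, and the drift $\vec{b}$ each enter the final formula with different powers of $\alpha$, and one must calibrate them so that the Taylor residuals from $\Phi$ are exactly of the size absorbable by the $-C\alpha|Du_1|-C\alpha u_1$ corrections in (PME--sub); once this calibration is fixed, the remaining estimates are routine chain--rule computations parallel to Corollary \ref{parabolic_super_barrier}.
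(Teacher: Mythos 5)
Your proposal is correct and follows essentially the same route as the paper, which proves this lemma simply by invoking the sup--convolution (subsolution) half of Proposition \ref{prop:perturb} applied to the Barenblatt profile together with the rescaling and Galilean change of variables and the explicit-formula regularity check from the proof of Corollary \ref{parabolic_super_barrier}. Your bookkeeping of the Taylor errors in $\nabla\Phi$ and of the $\alpha(m-1)u\Delta\Phi$ term matches the paper's derivation of \eqref{eq_v}, so no further comment is needed.
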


\begin{remark}\label{perturbed_velocity}
The reason for taking the hyperbolic scaling is because we will have occasion to require rather fine control on the boundary velocity (see Lemma \ref{nontangential}) and this is the scaling which preserves the velocity -- in contrast to the parabolic scaling, which dramatically reduces the effect of the drift $\Phi$ in the bulk (the positivity set), but unfortunately at the cost of severely disrupting the boundary velocity.
\end{remark}

To establish the Comparison Principle, we will need the following weak analogue of (ii) in the definition of viscosity supersolutions for subsolutions, the proof of which utilizes an approximation lemma from \cite{BV}.

\begin{lemma}\label{no_cross_from_above}
Let $u$ be a viscosity subsolution of $(PME$--$D)$, and let $\varphi$ be a classical free boundary supersolution from Lemma \ref{parabolic_super_barrier} which lies above $u$ at some time $t_0$.  Then $\varphi$ cannot cross $u$ from above at a later time $t > t_0$.
\end{lemma}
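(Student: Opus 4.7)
The plan is to argue by contradiction. Suppose $\varphi$ crosses $u$ from above after time $t_0$, and let $t^* > t_0$ be the first contact time, with contact point $x^*$. The first step is a \emph{strictification} of $\varphi$: using the explicit formula in Corollary \ref{parabolic_super_barrier} and the freedom in the parameters $\omega$ and $A$ (cf.\ Lemma \ref{sphere_symm} and Remark \ref{remark}), I perturb $\varphi$ to some $\varphi_\eta$ which is still a classical free boundary supersolution of (PME--D), remains strictly separated from $u$ on the parabolic boundary of $Q_\alpha$, satisfies (PME--D) with strict inequality in its positivity set, and whose free boundary propagates strictly faster than $\beta |\nabla \varphi_\eta| + \nabla \Phi \cdot \nabla \varphi_\eta / |\nabla \varphi_\eta|$ by a definite amount. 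Under the contradiction hypothesis, a new first contact time $t^*_\eta$ continues to exist for all sufficiently small $\eta > 0$.

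I then split into two cases, depending on whether the contact at $(x^*, t^*_\eta)$ lies in the positivity set of $\varphi_\eta$ or on its free boundary. In the first case, $\varphi_\eta(x^*, t^*_\eta) > 0$, so $\varphi_\eta$ is smooth in a parabolic neighborhood of the contact point and serves as a valid $C^{2,1}$ test function touching $u$ from above. The viscosity subsolution property of $u$ at $(x^*, t^*_\eta)$ then directly contradicts the strict classical supersolution inequality of $\varphi_\eta$.

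The second case is the subtle one: $\varphi_\eta(x^*, t^*_\eta) = 0$, so $(x^*, t^*_\eta) \in \Gamma(\varphi_\eta) \cap \Gamma(u)$, and the definition of viscosity subsolution gives no a priori information about $\Gamma(u)$. Here I would invoke the approximation lemma from \cite{BV}, which allows me to place a Barenblatt-type classical free boundary subsolution $\psi$, of the type constructed in Lemma \ref{parabolic_sub_barrier}, below $u$ near the contact point, with $\psi(y, s) = 0$ at some point $(y, s) \in \Gamma(u)$ close to $(x^*, t^*_\eta)$. By tuning the Barenblatt parameters so that its free boundary velocity strictly exceeds that of $\varphi_\eta$, I arrange $\psi \prec \varphi_\eta$ at time $s$ and yet force $\psi$ to cross $\varphi_\eta$ strictly before $t^*_\eta$. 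Since $\psi$ and $\varphi_\eta$ are respectively classical free boundary sub-- and supersolutions, the standard classical comparison (an application of Hopf's lemma in $\{\varphi_\eta > 0\}$ combined with the strict ordering of free boundary velocities, exactly as in the argument in Lemma \ref{classical_visc}) rules out such a crossing, and this gives the contradiction.

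The main obstacle is the second case: the viscosity subsolution condition imposes no direct control on $\Gamma(u)$, so the crucial input is the approximation lemma from \cite{BV}, which extracts enough non-degeneracy of $u$ near its free boundary to allow insertion of an explicit classical Barenblatt sub-barrier. Once such a sub-barrier is in place, the contradiction reduces to the (by now standard) classical comparison between the sub-barriers of Lemma \ref{parabolic_sub_barrier} and the strictified super-barriers of Corollary \ref{parabolic_super_barrier}, for which the hyperbolic scaling noted in Remark \ref{perturbed_velocity} is what keeps the free boundary velocities under control.
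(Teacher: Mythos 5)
Your Case 1 (contact at a point where the barrier is positive) is fine, but Case 2 --- contact on the free boundaries --- is exactly the heart of the lemma, and your resolution of it has a genuine gap. You propose to insert a Barenblatt-type classical free boundary subsolution $\psi$ \emph{below} $u$, with a free boundary point of $\psi$ sitting on $\Gamma(u)$ near the contact, and then to play $\psi$ against $\varphi_\eta$ by classical comparison. But a viscosity subsolution carries no nondegeneracy whatsoever near its free boundary: the definition constrains $u$ only through test functions touching it \emph{from above}, so nothing prevents $u$ from vanishing in an arbitrarily degenerate way at $\Gamma(u)$, and there is no interior-ball or lower-gradient-type property that would let you pin a Barenblatt profile under $u$ with its edge on $\Gamma(u)$. (This is precisely why, in the proof of Theorem \ref{thm:cp}, the sup-convolution $Z$ is introduced before any sub-barrier is slid underneath --- $Z$ has the interior ball property; the raw $u$ in the present lemma does not.) Moreover, the approximation lemma of \cite{BV} that you invoke does not supply such nondegeneracy of $u$: Lemma 4.4 there says something quite different, namely that the traveling-wave \emph{supersolution} of Lemma \ref{sphere_symm} is the decreasing limit of strictly positive classical supersolutions. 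Without a legitimate way to transfer the hypothesized crossing of $u$ past $\varphi_\eta$ into the existence of your $\psi$, the argument in Case 2 is circular: classical comparison indeed forbids a free boundary subsolution starting strictly below $\varphi_\eta$ from crossing it, but that only shows no such $\psi$ exists --- it yields no contradiction with the behavior of $u$.

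The paper's proof avoids the free-boundary contact case altogether by exploiting the specific structure of the barrier (this is why the statement restricts $\varphi$ to the barriers of Corollary \ref{parabolic_super_barrier}): write $\varphi$ as the inf-convolution of the traveling wave $\psi$ of Lemma \ref{sphere_symm}, pick the point $(x_1,t_1)$ where the infimum is attained, and observe that the translate $\tilde\psi(x,t)=\psi(x+(x_1-x_0),\,t+(t_1-t_0))$ also touches $u$ from above at the contact point. By Lemma 4.4 of \cite{BV}, $\tilde\psi$ is the monotone decreasing limit of \emph{strictly positive} classical supersolutions $\psi_\varepsilon$. Because each $\psi_\varepsilon$ is positive and smooth, any contact of $u$ with $\psi_\varepsilon$ would occur at a point where $\psi_\varepsilon$ is an admissible $C^{2,1}$ test function touching $u$ from above, and the viscosity subsolution inequality together with the Strong Maximum Principle rules this out; letting $\varepsilon\to 0$ gives $u\le\tilde\psi$, contradicting the assumed crossing. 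If you want to keep your two-case structure, you should replace your Case 2 by this approximation-from-above argument rather than an insertion-from-below argument.
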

\begin{proof}
Let $\varphi$ and $u$ be as described in the statement, and suppose that $\varphi$ touches $u$ from above at some point $(x_0, t_0)$.  From Lemma \ref{parabolic_super_barrier}, we have that $\varphi$ is given as the inf convolution of some spherical traveling waves from Lemma \ref{sphere_symm}, which we denote $\psi$.  Further, let us suppose the infimum is achieve at $(x_1, t_1)$ so that 1) $\varphi(x_0, t_0) = \psi(x_1, t_1)$ and 2) by the definition of $\varphi$ as an inf convolution, the translated function 
$$
\tilde \psi(x, t) = \psi(x + (x_1 - x_0), t + (t_1 - t_0))
$$ also touches $u$ from above at the point $(x_0, t_0)$.  From Lemma 4.4 in \cite{BV}, we know that $\psi$ can be given as the monotone limit of classical positive supersolutions, and hence the same is true of $\tilde \psi$: I.e., there exsits $\psi_\varepsilon \searrow \tilde \psi$, with $\psi_\varepsilon > 0$ classical.  But since $u$ cannot touch $\psi_\varepsilon$ by the Strong Maximum Principle, we obtain in the limit that $u \leq \tilde \psi$, which is a contradiction.
\end{proof}

\subsection{Comparison Principle and Identification with Weak Solution}

Here the outline of the proof closely follows that of the corresponding result for (PME) (Theorem 2.1 in \cite{CV}): We will give an abridged version of the proof, pointing out main steps and modifications for our problem.

\begin{thm}\label{thm:cp} [Comparison Principle]
If $u$ is a viscosity subsolution and $v$ is a viscosity supersolution in the sense of Definition \ref{viscosity} with strictly separated initial data, $u_0 \prec v_0$, then $u(x, t) \leq v(x, t)$ for every $(x, t) \in Q$. 
\end{thm}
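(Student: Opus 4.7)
The plan is to adapt the Caffarelli--Vazquez argument (Theorem 2.1 in \cite{CV}), absorbing the drift into the perturbation bookkeeping developed in Section~\ref{barriers}. Proceed by contradiction: assume $\{u > v\}$ is nonempty and let $t_0 > 0$ be the infimum of times at which it meets $Q$, with $t_0 > 0$ guaranteed by strict separation of initial data. Continuity produces a contact point $(x_0, t_0)$. If $v(x_0, t_0) > 0$, then both $u$ and $v$ are classical positive solutions of the pressure equation in a parabolic neighborhood of $(x_0, t_0)$ (by the interior regularity inherited from \cite{BH}), and the linear strong maximum principle applied to $v - u$ rules out an interior touching from below. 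Thus the contact must occur on a common free boundary point with $u(x_0, t_0) = v(x_0, t_0) = 0$.

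The next step is to regularize via spatial sup/inf convolutions
\[ u^r(x,t) = \sup_{y \in \bar B_r(x)} u(y,t), \qquad v_r(x,t) = \inf_{y \in \bar B_r(x)} v(y,t), \]
which, by an argument in the spirit of Proposition~\ref{prop:perturb}, remain viscosity sub/supersolutions of an $O(r)$-perturbation of (PME--D). Strict initial separation ensures $u^r(\cdot,0) \prec v_r(\cdot,0)$ for $r$ small, so $u^r$ and $v_r$ have a first crossing time $t_1 \leq t_0$. The envelope construction produces at the touching point $(x_1,t_1)$ a ball of radius $r$ contained in $\Omega_{t_1}(u^r)$ and tangent to $\Gamma_{t_1}(u^r)$ there, together with a matching tangent geometry on the $v_r$ side. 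In particular the common boundary point carries a well-defined inward normal $\nu$, and both free boundaries admit a $C^{1,1}$ one-sided description near $x_1$.

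Using this regular geometry, insert a traveling--wave supersolution $\tilde H$ on a small cylinder $Q_\alpha$ around $(x_1,t_1)$, supplied by Corollary~\ref{parabolic_super_barrier}, and translated so that $\tilde H(\cdot, t_1 - \alpha)$ lies strictly above $u^r(\cdot, t_1 - \alpha)$ on $Q_\alpha$ while its free boundary sweeps inward with outward normal velocity (cf.~Remark~\ref{remark})
\[ V = \omega + \nabla \Phi(x_1) \cdot \nu + C\alpha. \]
Choose $\omega$ so that $V$ strictly exceeds the free boundary speed permitted for the subsolution $u^r$ by Definition~\ref{viscosity}, up to the $O(\alpha + r)$ perturbation errors. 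Lemma~\ref{no_cross_from_above} applied to $u^r$ and $\tilde H$ then precludes $u^r$ from crossing $\tilde H$ from above in $Q_\alpha$; combined with the fact that at $(x_1, t_1)$ the free boundary of $u^r$ has reached that of $v_r$, and hence has crossed $\tilde H$ (which by construction sat strictly inside $\Omega_{t_1}(v_r)$), this yields the desired contradiction.

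The principal obstacle is the joint calibration carried out in Steps~2 and~3: one must verify that the sup/inf convolutions $u^r, v_r$ remain viscosity sub/supersolutions of a nearby equation with quantifiable $O(r)$ error (the analogue of Proposition~\ref{prop:perturb} in our setting), and then tune $\omega, \alpha, r$ so that the drift contribution $\nabla \Phi \cdot \nu$ is matched exactly on the two sides of the comparison while the residual $O(\alpha + r)$ terms are arranged to strictly favor the contradiction. Once these estimates are secured, the drift is effectively localized into lower-order perturbations and the remaining geometric argument reduces to the homogeneous (PME) case treated in \cite{CV}.
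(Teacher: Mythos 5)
Your skeleton (sup/inf regularization, reduction to a free--boundary contact point, barriers from Section 2.3) matches the paper's, but two of the load--bearing ingredients are missing or run backwards. First, you take both convolutions with the \emph{same fixed} radius $r$, whereas the paper takes $Z(x,t)=\sup_{\overline B_r(x,t)}u$ but $W(x,t)=\inf_{\overline B_{r-\delta t}(x,t)}v$ over space--time balls whose radius \emph{shrinks in time}. That $\delta$--shrinkage is the sole source of strictness in the whole argument: it makes $\Gamma(W)$ advance $\delta$ faster than $\Gamma(v)$, so that at the contact point one gets ``speed of $\Gamma(W)\ge m'+\delta>m$'' against ``speed of $\Gamma(Z)=m$''. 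With equal radii, every inequality available at the contact (slope of $Z\le$ slope of $W$, $V_Z\le |DZ|+D\Phi\cdot\nu$, $V_W\ge |DW|+D\Phi\cdot\nu$, $V_Z\ge V_W$) is non--strict and mutually consistent, and the $O(\alpha+r)$ drift/convolution errors degrade rather than create strictness; saying you will ``arrange the residual terms to strictly favor the contradiction'' is exactly the step that has no mechanism behind it. (Your preliminary reduction also leans on $u,v$ being classical near an interior contact ``by regularity inherited from \cite{BH}'' --- but they are arbitrary viscosity semi--solutions, not weak solutions, so no such regularity is available; the paper only invokes the strong maximum principle after regularization, for $Z$ and $W$.)

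Second, your final step places the traveling--wave supersolution $\tilde H$ of Corollary \ref{parabolic_super_barrier} \emph{above} $u^r$ and invokes Lemma \ref{no_cross_from_above}. To dominate $u^r$ on the initial slice of $Q_\alpha$ you need the slope $A$ of $\tilde H$ to exceed the slope of $u^r$ near the contact, for which you have no bound (the only comparison is $u^r\le v_r$, and a viscosity supersolution may be arbitrarily steep); and Lemma \ref{sphere_symm} forces $\omega\gtrsim A$, so the barrier's free boundary then moves at least as fast as slope plus drift and cannot simultaneously lie above $u^r$ and lag behind $\Gamma(u^r)$ --- unless you already possess the strict slack you are trying to establish. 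The paper's Steps 3--4 go the opposite way precisely to avoid needing any upper bound on the supersolution: the non--tangential estimate (Lemma \ref{nontangential}) gives a \emph{lower} bound on the slope of $Z$ in terms of its boundary speed $m$, which permits inserting a Barenblatt--type \emph{subsolution} (Lemma \ref{parabolic_sub_barrier}) below $Z$, hence below $W\le v$ after a small translation; part (ii) of the supersolution definition then forces $\Gamma(v)$ to expand with speed at least $m'$ close to $m$, and the $\delta$--gain of $W$ converts this into $m'+\delta>m$, the contradiction. As written, your proposal lacks both the strictness device and a workable direction for the barrier comparison, so the contradiction does not close; Lemma \ref{no_cross_from_above} also applies to subsolutions of (PME--D) itself, not to $u^r$, which only solves an $O(r)$--perturbed equation, a further calibration the paper absorbs into the barrier constructions rather than into the solutions.
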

\begin{proof}
1) [Sup and Inf Functions]  For given $\delta > 0$ and $r > 0$ small with $r \gg \delta$, we introduce the regularized functions 
\[ W(x, t) = \inf_{\overline B_{r - \delta t}(x, t)} v(y, \tau)\]
and 
\[ Z(x, t) = \sup_{\overline B_r(x, t)} u(y, \tau)\]
First  note that $W$ and $Z$ preserve properties of $v$ and $u$: 
\begin{itemize}
\item [$\circ$] \emph{$W$ is a supersolution and $Z$ is a subsolution;}
\item [$\circ$] \emph{$Z(\cdot, r) \prec W(\cdot, r)$ for $r$ sufficiently small.}
\end{itemize}  
For a proof of the first item see \cite{BV}, Lemma 7.1 or the proof of Lemma \ref{parabolic_super_barrier}).  The proof of the second item relies on the fact that 
\begin{itemize}
\item [$\circ$] \emph{The support of a viscosity subsolutions and supersolutions evolve in a continuous way.  Here continuity is understood as continuity in the Hausdorff distance (in time) of the positivity set.}
\end{itemize}
The proof of this can be done by comparison with the supersolutions (respectively subsolutions) constructed in Lemma \ref{parabolic_super_barrier} (respectively Lemma \ref{parabolic_sub_barrier}).  We omit the details since with replacement of barriers it is no different from the proof of Proposition 6.2 in \cite{BV}.    

Thus if we can prove that $W$ stays above $Z$ for all choices of $r$ and $\delta$(sufficiently small), then we may take $\delta \rightarrow 0$ and then $r \rightarrow 0$ to recover the conclusion for $u$ and $v$.  First let us note that due to the Strong Maximum Principle, $W$ cannot touch $Z$ from above, and therefore we are reduced to the analysis of a first contact point of $W$ and $Z$ at some $P_0 = (A, t_0)$. 

The key usefulness of $Z$ and $W$ lies in the fact that they enjoy interior/exterior ball properties: 
\begin{itemize}
\item [$\circ$] \emph{The positivity set of $Z$ has the interior ball property with radius $r$ at every point of its boundary \emph{and} at the points of the boundary of the support of $u$ where these balls are centered we have an exterior ball;}
\item [$\circ$] \emph{The positivity set of $W$ has the exterior ball property with radius less than $r - \delta t$ (since in this case we really have an exterior ellipsoid in space--time) and at the points of the boundary of the support of $v$ where these balls are centered we have an interior ball.}
\end{itemize}
For detailed proofs of these statements we again refer the reader to \cite{BV}. 

2) [The Contact Point] The first contact point $P_0 = (x_0, t_0)$ is located at the free boundary of both functions.  Therefore by the definitions of $Z$ and $W$, there is a point $P_1 = (x_1, t_1)$ on the free boundary of $u$ located at distance $r$ from $P_0$ and there is another point $P_2 = (x_2, t_2)$ on the free boundary of $v$ at distance $r_0 = r - \delta t_0$ from $P_0$.  Let us also denote by $H_Z$ (respectively $H_W$) the tangent hyperplane to the free boundary of $Z$ (respectively $W$) at $P_0$.  (see Figure \ref{fig_contact_point})
\begin{figure}
\center{\includegraphics[height=3.5in, width=3.5in]{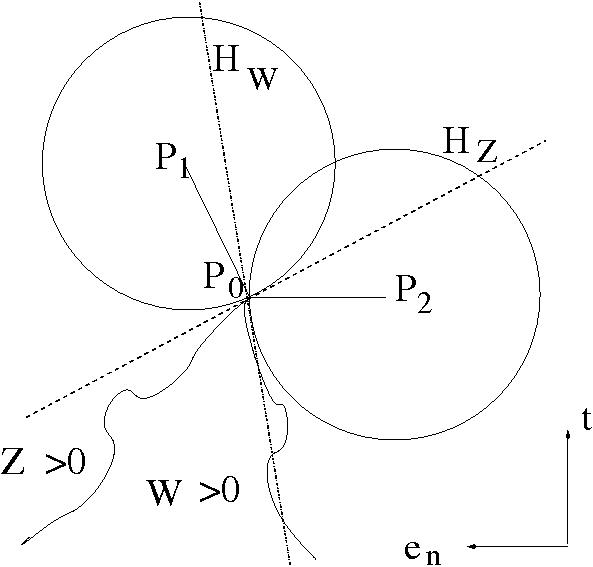}} \caption{The geometry at the contact point}
\label{fig_contact_point}
\end{figure}

\begin{lemma}\label{speed_well_defined}
Neither $H_Z$ nor $H_W$ is horizontal. In particular, one can denote the space-time normal vector to $H_Z$, in the direction of $P_1-P_0$, as $(e_n,m)\in\R^n\times \R$ where $|e_n|=1$ and $-\infty < m<\infty$.
\end{lemma}
\begin{proof}
It is enough to show that $t_1>t_0-r$ (i.e., $\Gamma(Z)$ cannot propagate with infinite speed) and $t_2 <t_0+r $ (i.e., $\Gamma(W)$ cannot propagate with negative infinite speed).  The desired conclusion then follows by the ordering of $Z$ and $W$.

We first show that $t_2<t_0+r$.  Otherwise $H_W$ is horizontal and after translation we have $P_0 = (0, -r)$ and $P_2 = (0, 0)$.  Moreover $\Omega(v)$ has an interior ball at $P_2$ with horizontal tangency with radius $0 < r^\prime < r$.  Now in any parabolic cylinder 
$$
C_{\eta} = \{ (x, t): |x| \leq \eta, -\eta^2 \leq t \leq 0\}
$$
 with bottom edge contained in the interior ball (which can be achieved by taking $\eta \leq r^\prime$), we have by continuity that $v \geq M > 0$ on that edge. (see Figure \ref{fig_horizontal_tangent})
\begin{figure}
\center{\includegraphics[height=3.5in]{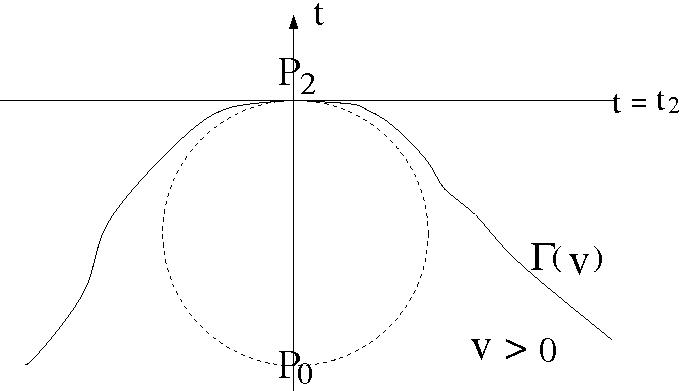}}
\caption{Infinite, negative speed}\label{fig_horizontal_tangent}
\end{figure}
 
On the lateral boundary of $C_\eta$ it may be the case that $v = 0$, so we will have to compare with a subsolution with support strictly contained in $-\eta < |x| < \eta$ at time $t=-\eta^2$ and still contains $0$ in its support at time $t = 0$, which rigorously implies that $v$ cannot contract sufficiently fast for $(0, 0)$ to be a free boundary point.  The necessary subsolution can be constructed as the one in Lemma \ref{parabolic_sub_barrier}, adjusted for the parabolic scaling.

The case $t_1>t_0-r$ follows \emph{mutatis mutantis} from the arguments in \cite{CV} and \cite{BV} (see \cite{CV}, Lemma 4.2 or \cite{BV}, Lemma 8.2) with the barrier in Lemma \ref{parabolic_super_barrier} replacing the barriers used therein.  
\end{proof}

3) [Non--tangential Estimate] 
The next lemma states that the normal velocity $V$ of $\Gamma(Z)$ at $(x_0,t_0)$ satisfies, in the viscosity sense,
$$
V_{(x_0,t_0)} \leq (|DZ| +D\Phi(x_0)\cdot\frac{DZ}{|DZ|})(x_0,t_0).
$$

\begin{lemma}\label{nontangential}
Let $x_n:= x\cdot e_n$, and consider a non-tangential cone $K:=\{x: x_n \geq k|x| \mbox{ with $k>0$}\}$.  Then we have 
$$
\lim_{x\in K,~x \to 0}\dfrac{Z(x_0+x, t_0)}{x_n} \geq m- D\Phi(x_0)\cdot e_n.
$$
\end{lemma}
\begin{proof}
The argument is parallel to the proof of Lemma 4.3. in \cite{CV}; the only difference for us is taking into account the change of reference frame introduced by the drift given by $\Phi$. 
This is ensured by the local nature of the construction of our barrier in Lemma \ref{parabolic_super_barrier}, which replaces the corresponding barriers used in \cite{CV}.
 
\end{proof}

4) [Conclusion] Due to Lemma \ref{nontangential}, we may place a small subsolution $\varphi$ from Lemma \ref{parabolic_sub_barrier} below $Z$ at $P_0$ with speed close to $m$ (again see Remark \ref{perturbed_velocity}, which assures us that our subsolutions are constructed so that this is possible) such that it crosses anything with speed $m^\prime < m$.  Since $\varphi$ is also below $W$ and hence $v$ (after a small translation), $v$ must expand by at least $m^\prime$, but then $\Gamma(W)$ has speed $m^\prime + \delta > m$ at $P_0$, yielding a contradiction to the fact that $Z$ touched $W$ from \emph{below} at $P_0$.
\end{proof}

We can now establish uniqueness of viscosity solutions:

\begin{thm}\label{exists_unique}
The problem (PME--D) admits a unique solution in the class of viscosity solutions as defined in Definition \ref{viscosity} for continuous and nonnegative initial data.  This solution coincides with the continuous weak solution. 
\end{thm}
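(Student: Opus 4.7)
The existence half is essentially free: by Theorem \ref{weak_soln_results} the weak solution $w$ with initial data $u_0$ exists, and by Corollary \ref{weak} this $w$ is a viscosity solution, which settles existence. For uniqueness and the identification, the plan is to take an arbitrary viscosity solution $u$ with initial data $u_0$ and squeeze it between two one-parameter families of weak solutions whose initial data are strictly separated from $u_0$, then invoke the Comparison Principle (Theorem \ref{thm:cp}) and pass to the limit using continuous dependence of weak solutions on their initial data.

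For the upper bound I would let $w^+_\varepsilon$ denote the weak solution of (PME--D) with initial data $u_0+\varepsilon$, set on a ball large enough that the support of $u$ (compact at each $t$ by Corollary \ref{finite_propagation}) stays in its interior throughout the time interval of interest. Since $\mbox{supp}(u_0)$ is compact while $u_0+\varepsilon>0$ everywhere, one has $u_0\prec u_0+\varepsilon$. By Corollary \ref{weak}, $w^+_\varepsilon$ is a viscosity solution and \textit{a fortiori} a supersolution, so Theorem \ref{thm:cp} yields $u\leq w^+_\varepsilon$. Symmetrically, I would let $w^-_\varepsilon$ be the weak solution with initial data $(u_0-\varepsilon)_+$; continuity of $u_0$ gives $\overline{\{u_0>\varepsilon\}}\subset\{u_0>0\}\subseteq\mbox{Int}(\mbox{supp}(u_0))$ and $(u_0-\varepsilon)_+<u_0$ on the former set, so $(u_0-\varepsilon)_+\prec u_0$, and Theorem \ref{thm:cp} applied to the viscosity subsolution $w^-_\varepsilon$ and the viscosity supersolution $u$ yields $w^-_\varepsilon\leq u$.

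Sending $\varepsilon\to 0$ in $w^-_\varepsilon\leq u\leq w^+_\varepsilon$ and using the $L^1$-contraction together with the equicontinuity of weak solutions (Theorem \ref{weak_soln_results} and Lemma \ref{equicontinuity}), both $w^\pm_\varepsilon$ converge locally uniformly to the unique weak solution $w$ with initial data $u_0$, and we conclude $u\equiv w$, which is simultaneously uniqueness of viscosity solutions and identification with the weak solution. The only real obstacle is this passage to the limit: one needs that a small sup-norm perturbation of the initial data produces a small perturbation of the weak solution uniformly on compact subsets of $Q$. This is built into \cite{BH} via the $L^1$-contraction together with the DiBenedetto-type modulus of continuity, but can also be carried out directly using the approximations of Lemma \ref{equicontinuity} and a diagonal argument.
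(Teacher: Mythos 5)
Your proposal is correct and takes essentially the same approach as the paper: an arbitrary viscosity solution is sandwiched between solutions with initial data $u_0+\varepsilon$ (from above, giving the maximal solution) and $(u_0-\varepsilon)_+$ (from below) using Theorem \ref{thm:cp} together with Corollary \ref{weak}, and then one passes to the limit. The paper carries out the $\varepsilon\to 0$ limit for the lower family precisely by the diagonal approximation through Lemma \ref{equicontinuity} and uniqueness of weak solutions that you mention as an alternative route, so no gap remains.
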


\begin{proof}  
The existence of a continuous weak solution can be provided as the uniform limit of classical solutions with initial data $u_{0, \varepsilon} = u_0 + \varepsilon$, and by Lemma \ref{stability}, such a limit, which we will denote by $U$, is also a continuous viscosity solution.  Further, by comparison with $u_{0, \varepsilon}$ and taking a limit, it is clear that  such a limit $U$ is also a maximal viscosity solution.  

Uniqueness would follow if we can show that any other viscosity solution $u$ also cannot be smaller than $U$.
For this purpose, consider $u_n(x,t)$ with initial data $u_n(x,0):= (u_0-\frac{1}{n})_+$.    Now consider positive $u_n^{\e_n}$ such that $|u_n^{\e_n}-u_n| <\frac{1}{n}$ in $\R^n\times [0,T]$. It follows from Lemma \ref{equicontinuity} that $u_n^{\e_n}$  uniformly converges to $U_2(x,t)$, which is then a continuous weak solution of (PME--D).  Therefore, by uniqueness of weak solutions, $U_2$ is equal to $U$.  On the other hand by Theorem~\ref{thm:cp} $u_n \prec u$ and thus $U=U_2 \leq u$. Hence we conclude.
\end{proof}

Using Theorem \ref{thm:cp} and Theorem \ref{exists_unique}, we can in fact prove a stronger comparison theorem for viscosity solutions (see \cite{BV}, Theorem 10.2):

\begin{thm}\label{cp_strong}
Let $u_1$ and $u_2$ be respectively a viscosity subsolution and a viscosity supersolution of (PME--D) in some parabolic cylinder $Q$ with initial data $u_{0, 1}$ and $u_{0, 2}$ such that $u_{0, 1}(x) \leq u_{0, 2}$.  Then $u_1(x, t) \leq u_2(x, t)$ for all $(x, t) \in Q$.
\end{thm}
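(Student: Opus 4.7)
The plan is to remove the strict-separation hypothesis in Theorem \ref{thm:cp} by approximating the common initial datum $u_{0,1}$ simultaneously from above and from below by two sequences that \emph{are} strictly separated from the opposite datum, solving (PME--D) for each, and sending the approximation parameters to infinity. The unique viscosity solution $W$ with initial data $u_{0,1}$, which exists by Theorem \ref{exists_unique}, will serve as the common limit that pins $u_1$ and $u_2$ together.

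For the approximations I would set, for each $n,m\geq 1$,
\[
u_{0,1}^{(n)} := (u_{0,1}-1/n)_+,\qquad u_{0,1}^{[m]}:=u_{0,1}+\eta/m,
\]
where $\eta$ is a fixed smooth, nonnegative, compactly supported function that is strictly positive on a neighborhood of $\mathrm{supp}(u_{0,1})$. A short point-set argument using compact support and continuity yields the two strict separations
\[
u_{0,1}^{(n)}\prec u_{0,2}\qquad\text{and}\qquad u_{0,1}\prec u_{0,1}^{[m]}.
\]
The first relies on the chain $\overline{\{u_{0,1}>1/n\}}\subset\{u_{0,2}\geq 1/n\}\subset\mathrm{Int}(\mathrm{supp}(u_{0,2}))$ together with the strict pointwise cushion $u_{0,1}-1/n<u_{0,1}\leq u_{0,2}$, while the second is immediate from $\eta>0$ on a neighborhood of $\mathrm{supp}(u_{0,1})$.

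Let $v^n$ and $w^m$ denote the unique viscosity solutions (equivalently weak solutions, by Theorem \ref{exists_unique}) with initial data $u_{0,1}^{(n)}$ and $u_{0,1}^{[m]}$ respectively. Two applications of Theorem \ref{thm:cp} then give
\[
v^n\leq u_2\quad\text{and}\quad u_1\leq w^m\qquad\text{on }Q,
\]
the first by comparing the subsolution $v^n$ against the supersolution $u_2$ with $u_{0,1}^{(n)}\prec u_{0,2}$, and the second by comparing the subsolution $u_1$ against the supersolution $w^m$ with $u_{0,1}\prec u_{0,1}^{[m]}$. Since $u_{0,1}^{(n)}$ and $u_{0,1}^{[m]}$ converge uniformly to $u_{0,1}$, the equicontinuity of weak solutions recorded in Lemma \ref{equicontinuity}, combined with the stability Lemma \ref{stability} and the uniqueness clause of Theorem \ref{exists_unique}, forces both $v^n$ and $w^m$ to converge uniformly to the same function $W$. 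Passing to the limit in the two inequalities above yields $u_1\leq W\leq u_2$, which is the desired conclusion.

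The main place requiring care is the verification of the first strict separation in the potentially degenerate case where $u_{0,1}$ and $u_{0,2}$ agree at the boundary of their supports; the $-1/n$ cushion together with the open/closed analysis of supports handles this. Everything else amounts to one application each of the results already developed earlier in the paper, so no new analytic input is needed.
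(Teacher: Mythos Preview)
Your argument is correct and is essentially the approach the paper indicates: the paper does not spell out a proof but simply says the result follows from Theorem~\ref{thm:cp} and Theorem~\ref{exists_unique} (referring to \cite{BV}, Theorem~10.2, for details), and your two--sided approximation $u_{0,1}^{(n)}=(u_{0,1}-1/n)_+$ and $u_{0,1}^{[m]}=u_{0,1}+\eta/m$, followed by two applications of Theorem~\ref{thm:cp} and a passage to the limit through the unique solution $W$ with datum $u_{0,1}$, is exactly the intended mechanism (and in fact reproduces the proof of Theorem~\ref{exists_unique} on one side).

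One minor imprecision: Lemma~\ref{equicontinuity} as stated treats the specific approximation $u_0+\varepsilon$, not your compactly supported perturbations $u_{0,1}+\eta/m$, so invoking it verbatim for $w^m\to W$ is a slight overreach. This is not a genuine gap, however: since the $w^m$ are weak solutions, monotone in $m$, and sandwiched by $W\leq w^m\leq w^1$, the $L^1$--contraction in \cite{BH} (or the DiBenedetto equicontinuity underlying Lemma~\ref{equicontinuity}) gives $w^m\to W$ directly, and your limiting inequalities $u_1\leq W\leq u_2$ follow.
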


This comparison theorem in particular allows us to restrict attention to only the (classical free boundary) supersolutions used to establish Theorem \ref{thm:cp}, and consequently, we can now strength Lemma \ref{no_cross_from_above} to enable comparison with any classical free boundary supersolution:

\begin{lemma}
Let $u$ be a viscosity subsolution of (PME--D), and let $\varphi$ be a classical free boundary supersolution which lies above $u$ at some time $t_0$.  Then $\varphi$ cannot cross $u$ from above at a later time $t > t_0$.
\end{lemma}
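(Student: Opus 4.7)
The plan is to deduce this strengthened lemma as a nearly immediate consequence of the strong comparison theorem (Theorem \ref{cp_strong}) that has just been established. The key observation is that Lemma \ref{no_cross_from_above} was proved only for the very specific supersolutions of Corollary \ref{parabolic_super_barrier}, precisely because it was a preliminary step toward Theorem \ref{thm:cp}; now that we possess the full comparison theory, the restriction on the choice of $\varphi$ can be lifted for free. There is no circularity, since the proof of Theorem \ref{cp_strong} only invoked Lemma \ref{no_cross_from_above}, not the present lemma.

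The first step is to apply Lemma \ref{classical_visc} to conclude that the classical free boundary supersolution $\varphi$ is automatically a viscosity supersolution of (PME--D) in $\Sigma$ in the sense of Definition \ref{viscosity}. The second step is to use the hypothesis that $\varphi$ lies above $u$ at time $t_0$ to obtain the ordered-initial-data condition $u(\cdot, t_0)\leq \varphi(\cdot, t_0)$ on $\Sigma(t_0)$. One then invokes Theorem \ref{cp_strong} on the sub-cylinder $\Sigma\cap \{t\geq t_0\}$ with $u_1 := u$ and $u_2 := \varphi$, concluding $u(x,t)\leq \varphi(x,t)$ for all $(x,t)\in\Sigma$ with $t\geq t_0$. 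This is exactly the statement that $\varphi$ cannot cross $u$ from above.

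The only delicate point is that $\varphi$ is defined on a cylinder $\Sigma$ rather than on all of $Q$, so one must ensure that the application of Theorem \ref{cp_strong} does not require spurious lateral-boundary information. This is handled by a standard localization: if a crossing were to occur, then by continuity of both functions there is a first crossing time $t^\star > t_0$ and an interior contact point $(x^\star, t^\star)\in\Sigma$, and for $t$ slightly less than $t^\star$ the inequality $u < \varphi$ is strict on a full spatial neighborhood of $x^\star$. Restricting to a small parabolic sub-cylinder of $\Sigma$ around $(x^\star, t^\star)$ on whose lateral boundary $u \leq \varphi$ then reduces the claim to a direct application of Theorem \ref{cp_strong}, producing the required contradiction.

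The main obstacle, if any, is really conceptual rather than technical: one must be careful to verify that the passage from Lemma \ref{no_cross_from_above} (classical barriers of the type in Corollary \ref{parabolic_super_barrier}) to arbitrary classical free boundary supersolutions genuinely goes through, and in particular that the intermediate Theorems \ref{thm:cp}, \ref{exists_unique}, and \ref{cp_strong} were not themselves secretly using this strengthened form. A glance at their proofs shows they were not, and so the deduction is legitimate and the argument above goes through without further work.
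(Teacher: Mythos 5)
Your argument is correct and takes essentially the same route as the paper: both reduce the lemma to the comparison theory just established, first using Lemma \ref{classical_visc} to regard the classical free boundary supersolution $\varphi$ as a viscosity supersolution and then invoking Theorem \ref{cp_strong}, with no circularity since neither Theorem \ref{thm:cp} nor Theorem \ref{cp_strong} relies on this strengthened lemma. The only (harmless) difference is that the paper interposes the viscosity solution $v$ launched from the same data as $\varphi$, obtaining $v\leq\varphi$ from Theorem \ref{cp_strong} and $u\leq v$ from Theorem \ref{thm:cp}, whereas you apply Theorem \ref{cp_strong} directly to the pair $(u,\varphi)$ together with a localization remark.
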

\begin{proof}
Let us first replace $\varphi$ by a viscosity solution of (PME--D) with the same initial data which we denote $v$.  Since $\varphi$ is a viscosity supersolution by Lemma \ref{classical_visc}, we have that $\varphi \geq v$ by Theorem \ref{cp_strong}. Finally, $u \leq v$ by Theorem \ref{thm:cp}.
\end{proof}

\section{Convergence to Equilibrium}
We begin by discussing the set of equilibrium solutions to (PME--D) and reviewing some known results.  Since by Theorem \ref{exists_unique}, the unique viscosity solution coincides with the continuous weak solution, we may carry out our discussion in the context of weak solutions.    

The set of equilibrium solutions and uniform convergence of solutions to the equilibrium are established in \cite{BH}. Below we state the corresponding result in the pressure variable.
\begin{thm}\label{equilibrium_solution} [Theorem 5.1, \cite{BH}]
The set of equilibrium solutions for (PME--D) is given by 
\[ \begin{split}S = \{W \in C(\mathbb R^n): \mbox{$W \geq 0$ in $\Omega$}, \mbox{and for every $x \in \Omega$, either $W(x) = 0$}\\ 
\mbox{or  $W + \Phi = C$ for some constant $C$ in a neighborhood of $x$}\}\end{split}\]
Further, given $u_0$, there exists a unique $W(x) \in S$ such that $u(x,t)$ uniformly converges to $W$ as $t\to\infty$.
\end{thm}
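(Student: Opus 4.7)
The plan is to work primarily in the density variable $\rho=(\tfrac{m-1}{m}W)^{1/(m-1)}$ and exploit the formal gradient--flow structure of (PME--D) with respect to the free energy
$$F(\rho)=\frac{1}{m-1}\int \rho^{m}\,dx+\int \rho\,\Phi\,dx.$$
A stationary $\rho_\infty$ must satisfy $\nabla\!\cdot(\rho_\infty \nabla(W_\infty+\Phi))=0$ in the sense of distributions, so after multiplying by $W_\infty+\Phi$ and integrating by parts (using the Neumann condition on $\partial\Omega$, or finite propagation in the whole--space case which confines the support to a large fixed ball),
$$\int \rho_\infty\,|\nabla(W_\infty+\Phi)|^{2}\,dx=0.$$
Hence $\nabla(W_\infty+\Phi)\equiv 0$ on every connected component of $\{W_\infty>0\}$, which is precisely the description of $S$. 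The converse, that any such $W$ is a stationary weak solution, is immediate from the distributional formulation.

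For the convergence, the formal identity
$$\frac{d}{dt}F(\rho(t))=-\int \rho\,|\nabla(W+\Phi)|^{2}\,dx$$
shows that $F$ is monotone non-increasing along the flow. Since $F$ is bounded below on the mass-constrained set $\{\rho\ge 0:\int\rho=m_0\}$ (a consequence of $F$ being strictly convex in $\rho$ together with the coercivity of the $\rho^{m}$ term), integrating in time gives
$$\int_{0}^{\infty}\!\!\int \rho\,|\nabla(W+\Phi)|^{2}\,dx\,dt<\infty.$$
Thus one may extract $t_n\to\infty$ along which the dissipation tends to zero. Combined with the equicontinuity of weak solutions from Theorem \ref{weak_soln_results} and \cite{DiB}, Arzel\`a--Ascoli produces a subsequence with $\rho(\cdot,t_n)\to\rho_\infty$ uniformly; the limit satisfies the vanishing--dissipation identity above, and therefore $\rho_\infty\in S$.

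Uniqueness of the limit follows from strict convexity of $F$ on the convex set $\{\rho\ge 0:\int \rho=m_0\}$: this functional admits a unique minimizer $\rho_*$, and any element of $S$ with mass $m_0$ is a critical point of $F$ under the mass constraint, so strict convexity identifies all equilibria of mass $m_0$ with $\rho_*$. Consequently every subsequential limit equals $\rho_*$, and a standard LaSalle--invariance argument (using monotonicity of $F$ and precompactness of the orbit in $C$) promotes subsequential convergence to convergence as $t\to\infty$. The corresponding $W$ translates back via \eqref{density} to give the unique $W\in S$ claimed in the theorem, and convergence is uniform because the orbit is precompact in $C(\mathbb R^n)$.

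The main obstacle is making the gradient--flow identity rigorous, since the degeneracy of (PME--D) and the presence of a moving free boundary preclude direct time--differentiation of $F$ along a weak solution. I would resolve this by passing through the smooth approximations $\rho_\varepsilon$ from Lemma \ref{equicontinuity} (for which all computations are classical), integrating the dissipation identity in time, and then sending $\varepsilon\to 0$ using the uniform $L^\infty$ bounds and equicontinuity in Theorem \ref{weak_soln_results} together with weak lower semicontinuity of the dissipation functional. A secondary technical point is justifying the integration by parts in the whole--space case; this is handled via the finite propagation property (Corollary \ref{finite_propagation}) or, alternatively, by invoking \cite{JGJ} as suggested in the remark following Lemma \ref{equicontinuity}.
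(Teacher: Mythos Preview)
The paper does not prove this theorem; it is quoted from \cite{BH}, with only the remark that the converse containment and the convergence statement rest on an $L^1$ contraction estimate established there. Your entropy--dissipation outline is a legitimate alternative route to the \emph{existence} of subsequential limits lying in $S$, and your identification of $S$ with the set of zero--dissipation states is correct in spirit.

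The genuine gap is in your uniqueness step. You assert that every element of $S$ with mass $m_0$ is a critical point of $F$ under the mass constraint, and then invoke strict convexity of $F$ to conclude that all such equilibria coincide with the unique minimizer $\rho_*$. This conflates two different notions of criticality. Elements of $S$ are stationary for the \emph{Wasserstein} gradient flow, which only forces $W+\Phi$ to be locally constant on $\{W>0\}$, possibly with a different constant on each connected component. The KKT conditions for the convex problem $\min\{F(\rho):\rho\ge 0,\ \int\rho=m_0\}$, by contrast, produce a single Lagrange multiplier and hence a single constant $C$. When $\Phi$ has several local minima the set $S$ contains many equilibria of the same total mass, obtained by distributing the mass among the wells in different proportions, and these are \emph{not} critical points of $F$ in the convex--analytic sense your argument requires. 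Concretely, initial data supported in one well of a double--well potential can converge to an equilibrium supported in that well alone, which is not the global minimizer of $F$; so the limit $W$ genuinely depends on $u_0$, not just on $m_0$, and strict convexity of $F$ cannot by itself pin it down. The device in \cite{BH} sidesteps this: once $t\mapsto\|\rho(t)-W\|_{L^1}$ is known to be non--increasing for every stationary $W$, any $W$ in the $\omega$--limit set satisfies $\|\rho(t_n)-W\|_{L^1}\to 0$ along a subsequence, and monotonicity immediately upgrades this to convergence of the full orbit.
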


It is fairly immediate that $S$ is contained in the set of equilibrium solutions; the converse containment and the convergence statement are established based on a $L^1$ contraction result in \cite{BH}.  

Under the assumption that $\Phi$ is convex. Note that the density function  $\rho(\cdot,t)$ given in (\ref{density}) preserves its $L^1$ norm over time. Therefore there is a unique equilibrium solution $u_{\infty}=(C_0-\Phi)_+$ to which $u(\cdot,t)$ uniformly converges as $t\to\infty$, i.e., the one with 
$$
\int (u_0)^{1/(m-1)}(x) dx = \int (u_{\infty})^{1/m-1}(x) dx.
$$

An explicit exponential rate of convergence is then derived in \cite{jose_et_al} by the entropy method:
\begin{thm}\label{Jose}
Suppose that $\Phi$ is strictly convex, i.e., there exists a constant $k_0>0$ such that   $x^T \cdot [(\hess \Phi(x)) x] \geq k_0 |x|^2$ for $x\in\R^n$.  Let $u_{\infty}=(C_0-\Phi)_+$ be the equilibrium solution to which our solution $u(x,t)$ converges as $t\to\infty$. Then there exist constants $K, \alpha > 0$ depending on $m$, $k_0$ and the $L^1$ norm of $u_0$ such that  
$$
\int |u(x,t)-u_\infty(x)| dx  \leq Ke^{-\alpha t}.
$$
\end{thm}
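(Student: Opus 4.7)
The plan is to follow the entropy dissipation method of \cite{jose_et_al}. In the density variable $\rho$, the equation reads $\rho_t = \nabla\cdot(\rho\,\nabla(\tfrac{m}{m-1}\rho^{m-1}+\Phi))$, which is the formal Wasserstein gradient flow of the free energy
\begin{equation*}
F(\rho) := \int_{\R^n}\Bigl[\tfrac{1}{m-1}\rho^m + \rho\,\Phi\Bigr]\,dx.
\end{equation*}
A direct computation for smooth positive solutions yields the dissipation identity
\begin{equation*}
\frac{d}{dt}F(\rho(t)) = -D(\rho(t)),\qquad D(\rho) := \int_{\R^n}\rho\,\Bigl|\nabla\bigl(\tfrac{m}{m-1}\rho^{m-1}+\Phi\bigr)\Bigr|^2\,dx,
\end{equation*}
and the density $\rho_\infty$ corresponding to $u_\infty = (C_0-\Phi)_+$ is identified as the unique mass-constrained minimizer of $F$, characterized by $\tfrac{m}{m-1}\rho_\infty^{m-1}+\Phi\equiv C_0$ on its support.

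The heart of the argument is the entropy--entropy dissipation inequality
\begin{equation*}
F(\rho) - F(\rho_\infty) \leq \frac{1}{2k_0}\,D(\rho),
\end{equation*}
a Bakry--\'Emery type estimate which follows from $k_0$-displacement convexity of $F$ along Wasserstein geodesics: the internal part $\int \rho^m/(m-1)$ is displacement convex for every $m>1$, while the potential part $\int \rho\,\Phi$ inherits $k_0$-convexity from $\hess \Phi \geq k_0 I$. Combining this with the dissipation identity gives the differential inequality $\tfrac{d}{dt}(F-F_\infty)\leq -2k_0\,(F-F_\infty)$, and Gr\"onwall yields $F(\rho(t))-F(\rho_\infty) \leq (F(\rho_0)-F(\rho_\infty))\,e^{-2k_0 t}$.

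To convert free energy decay into the advertised $L^1$ decay, I would apply a Csisz\'ar--Kullback--Pinsker type inequality of the form $\|\rho-\rho_\infty\|_{L^1}^2 \lesssim F(\rho)-F(\rho_\infty)$; this is justified because $z\mapsto z^m/(m-1)$ is uniformly convex on the bounded range of $\rho$, using the $L^\infty$ bound from Theorem~\ref{weak_soln_results}. The resulting rate is $\alpha = k_0$ up to constants depending on $m$ and $\|u_0\|_{L^1}$. The main technical obstacle is that the above formal identities involve $\rho^{m-1}$ derivatives and are only literally valid on strictly positive smooth solutions, whereas our solutions are merely continuous and have a free boundary. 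To handle this I would regularize via the $\rho_0+\e$ approximation of Lemma~\ref{equicontinuity}, derive the dissipation identity and the Bakry--\'Emery estimate at the smooth level, and pass $\e\to 0$ using uniform convergence together with lower semicontinuity of $D$ (so that the differential inequality survives) and decay of boundary contributions at $\Gamma(u)$.
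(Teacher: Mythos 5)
Your sketch follows essentially the same route as the paper: the paper does not prove this theorem itself but imports it from \cite{jose_et_al}, whose entropy dissipation method (free-energy decay via the dissipation identity and a Bakry--\'Emery/displacement-convexity entropy--entropy-dissipation inequality with constant governed by $k_0$, followed by a generalized Csisz\'ar--Kullback inequality) is exactly what you reconstruct, including the regularization needed to handle the degenerate free boundary. Note only that this argument yields the exponential $L^1$ rate for the density $\rho$, and the passage to the pressure variable $u$ stated in the theorem is done in the paper's remark following it, using the uniform bound and compact support from Corollary \ref{finite_propagation}; you should make that (easy) conversion explicit.
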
  

\begin{remark} In fact the estimate in \cite{jose_et_al} is given in terms of the pressure variable $\rho$. Due to Corollary ~\ref{finite_propagation},  for a convex (and in fact monotone) potential $\Phi$ $u$ is uniformly bounded with its support contained in a compact set for all times. This allows us to derive the estimate for $u$ from that of $\rho$ for $1<m<\infty$. Further, due to the equivalence of all $L^p$ norms in our setting, we will take some liberties in passing between $u$ and $\rho$ in our estimates.
 \end{remark}
 
 \subsection{Convex potential}

As an application of the viscosity solutions theory, we will convert the $L^1$ estimate in Theorem \ref{Jose} into a pointwise estimate (see Lemmas \ref{lowerbound} and \ref{diffusivity}); such an estimate in turn will yield a quantitative estimate on the rate of the free boundary convergence (see Theorem \ref{FB_conv_rate}).  

Rescaling $u$ by $cu(x, ct)$ if necessary, let us assume for the rest of this subsection that $u\leq 1$ and $\max \Delta \Phi \leq 1$ on our domain of consideration, which is bounded (see Corollary \ref{finite_propagation}) and we assume to be $\{|x|\leq R\}$ for some $R > 0$.

 \begin{lemma}\label{lowerbound}[Uniformly Bounded From Below]
There exists a sufficiently small constant $k>0$, depending only on $m$ and $n$, such that the following is true:

Suppose, for $(x_0,t_0)\in\R^n\times (0,\infty)$ and for $0<a<1$,
$$
a^{-n}\int_{B_a(x_0)} u(\cdot,t_0) dx \geq a^{k}.
$$

Then  $u(\cdot,t_0+a)\geq a^{k^\prime}$ in $B_a(x_0)$.

\end{lemma}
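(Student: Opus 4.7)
The plan is to compare $u$ from below with a perturbed Barenblatt subsolution from Lemma \ref{parabolic_sub_barrier} and invoke the comparison principle (Theorem \ref{cp_strong}). The overall strategy mirrors the classical Aronson--Caffarelli $L^1$--to--$L^\infty$ smoothing argument for the Porous Medium Equation, with the drift absorbed as an $O(\alpha)$ perturbation via the sup--convolution and exponential weight in Lemma \ref{parabolic_sub_barrier}.

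First I would convert the hypothesized integral lower bound into a pointwise lower bound on a small sub-ball. Since $u \leq 1$ after rescaling, Chebyshev's inequality applied to $\int_{B_a(x_0)} u(\cdot, t_0)\,dx \gtrsim a^{n+k}$ yields a set $A \subset B_a(x_0)$ of Lebesgue measure $\gtrsim a^{n+k}$ on which $u(\cdot, t_0) \geq \tfrac{1}{2}a^k$. A covering/pigeonhole argument then produces a ball $B_r(y) \subset B_a(x_0)$ with $r \sim a^{1 + k/n}$ in which $A$ occupies a definite fraction of the volume; a preliminary Barenblatt sub-solution spread at a scale $\alpha \ll a$ then upgrades this to a pointwise bound $u(\cdot, t_0') \geq c_0 a^k$ on the whole of $B_r(y)$ at some time $t_0' = t_0 + o(a)$.

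Second, taking the scaling parameter $\alpha = a$ in Lemma \ref{parabolic_sub_barrier}, I would construct a perturbed Barenblatt subsolution $\psi$ on $B_a(x_0) \times [t_0', t_0' + a]$ whose underlying Barenblatt $B(x,t;\tau,C) = (C(t+\tau)^{2\lambda} - K|x|^2)_+/(t+\tau)$ (with $\lambda = ((m-1)n+2)^{-1}$) has parameters $(\tau, C)$ chosen so that at time $t_0'$ the support of $\psi$ lies inside $B_r(y)$ with pointwise height $\leq c_0 a^k$, while at time $t_0' + a$ the support covers $B_a(x_0)$ with height $\geq a^{k'}$. A direct scaling computation shows this requires $\tau \sim \varepsilon a$ and $C \sim a^{2-2\lambda}$ with $\varepsilon$ in an interval of the form $a^{(1-k)/(1-2\lambda)} \lesssim \varepsilon \lesssim a^{k/(n\lambda)}$, which is non-empty provided $k$ is sufficiently small depending on $m$ and $n$; this determines $k'$. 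The $O(a)$ drift contribution is absorbed by the exponential prefactor $e^{-Ca(t-t_0')}$ and the center-shift $\alpha \vec{b}(t - t_0')$ built into Lemma \ref{parabolic_sub_barrier}.

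Finally, by Theorem \ref{cp_strong} the pointwise ordering $\psi(\cdot, t_0') \leq u(\cdot, t_0')$ propagates in time, so $\psi \leq u$ on $B_a(x_0) \times [t_0', t_0' + a]$; evaluating at time $t_0 + a$ and adjusting $k'$ slightly to absorb the $o(a)$ initial time shift yields the claim. The main obstacle is clearly the first step: the hypothesis supplies only averaged information, whereas a Barenblatt comparison needs pointwise ordering at the initial time. Closing this gap by a preliminary small-scale spreading argument and then matching it to the macroscopic Barenblatt evolution produces two competing inequalities on the scaling window, and it is precisely their compatibility that forces $k$ to lie below a threshold depending on $m$ and $n$.
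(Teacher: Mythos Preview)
Your overall architecture (mass $\to$ pointwise lower bound on a small ball $\to$ Barenblatt spreading) matches the paper's, but the first step as you sketch it does not close. From Chebyshev and $u\le 1$ you get a set $A\subset B_a(x_0)$ with $|A|\gtrsim a^{n+k}$ on which $u(\cdot,t_0)\ge \tfrac12 a^k$. A pigeonhole over balls of radius $r$ gives at best $|A\cap B_r(y)|\gtrsim r^n a^k$, i.e.\ fraction $\sim a^k$, \emph{not} a definite fraction; and even a definite fraction would not suffice, since a Barenblatt comparison requires $u$ to dominate the barrier \emph{pointwise} on its entire initial support. A measure bound on $A$ gives you no ball contained in $A$, and bare continuity of $u$ (with no quantitative modulus) does not convert ``$u\ge\tfrac12 a^k$ on a set of positive measure'' into ``$u\ge\tfrac14 a^k$ on a ball of controlled radius''. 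So your ``preliminary small-scale spreading'' is circular: it presupposes exactly the pointwise ordering it is meant to produce.

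The paper resolves this by introducing an auxiliary weak solution rather than working directly with $u$. After a parabolic rescaling $\tilde u(x,t)=u(a x,a^2 t)$, one solves a slightly modified PME (equation~(3.2)) for $w$ with initial data $(1-a)\tilde u(\cdot,0)\chi_{|x|\le 1}$; integration by parts shows $w$ retains essentially the full $L^1$ mass, and the H\"older regularity theory of DiBenedetto--Friedman \cite{DibF} for weak solutions then gives, at a fixed later time, a quantitative modulus of continuity. That modulus is precisely what converts ``$\int w\gtrsim a^k$ with $w$ supported in $B_2$'' into ``$w\ge c\,a^k$ on a ball of radius $a^{k/\gamma}$'' around the maximum point, after which the Barenblatt comparison you describe goes through. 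The missing ingredient in your argument is this appeal to parabolic H\"older estimates; without it the $L^1$-to-pointwise step cannot be made quantitative.
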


\vspace{10pt}

\begin{proof}
1. Let us define 
$$
\tilde{u}(x,t):= u(a(x-x_0),a^2(t-t_0)).
$$
Then $\tilde{u}$ is a supersolution of 
\begin{equation}\label{perturb}
\psi_t = (m-1)\psi\Delta \psi + |D\psi|^2 -Ca(|D\psi| + \psi),
\end{equation}
where $C$ is a constant depending on the $C^2$--norm of $\Phi$ (near $x_0$).  Below we will construct a subsolution of (\ref{perturb}) to compare with $\tilde{u}$ in order to establish the lemma.

\vspace{10pt}

2.  Let us consider $w(x,t)$ which satisfies
\begin{equation}\label{eqn1}
w_t = (\tilde{m}-1)w\Delta w +|Dw|^2 -Ca
\end{equation}
in the weak sense (see e.g., \cite{DibF}), with initial condition $w(x,0):=(1-a)\tilde{u}(x,0)\chi_{|x|\leq 1}$, where $\tilde{m}-1 = (1+a)(m-1)$. 

Then, say for $t\geq 1/2$, $w(\cdot,t)$ is H\"{o}lder continuous due to \cite{DibF}.
Since $u$ is bounded by $1$, so is $w(x,0)$, by the Maximum Principle.  Further, note that any solution of the (PME) is now automatically a supersolution of \eqref{eqn1} and therefore, using an appropriate Barenblatt profile as a supersolution of (\ref{eqn1}), one can check that 
\begin{equation}\label{bound}
\Omega_{1/2}(w) \subset \{|x| \leq 2\}.
\end{equation} 

 Moreover, integration by parts yields that 
$$
\int w(x,t) dx  = \int w(x,0) dx - at,
$$
and in particular we deduce that e.g., $\int w(x,1/2) dx \geq a^k/2$.  Let $x^*$ be the point where $w(\cdot,1/2)$ assumes its maximum, then from (\ref{bound}) we see that $w(x^*,1/2) \geq C_{n}a^k$ for some dimensional constant $C_n$.  Due to the H\"{o}lder regularity of $w$, 
$$
w(\cdot,1/2) \geq (C_n/2) a^k \hbox{ in } B_{a^{k_2}}(x^*).
$$
where $k_2 = \gamma^{-1} k$ where $0<\gamma<1$ depends only on $m$ and $n$.

Let $U(x,t):= B(x-x^*,t;\tau,C)$, where $B(x, t; \tau, C) = \frac{(C(t+\tau)^{2\lambda} - K|x|^2)_+}{(t+\tau)}$ is the Barenblatt profile given in Lemma \ref{barenblatt}, with $\tilde{m}$ as the permeability constant and the conditions 
$$
C\tau^{2\lambda-1} = a^k/4 \hbox{ (height), and } \sqrt{C/K} \tau^{\lambda} \leq a^{k_2}/2 \hbox{ (the size of initial support).}
 $$ 
 Note that, in its positive set, $U$ is concave.
  Therefore $\tilde{U}(x,t):=(U(x,t)-Cat)_+$ satisfies, in its positive set, 
 $$
 \begin{array}{lll}
 \tilde{U}_t  &=& U_t -Ca \\ \\
                       &=& (\tilde{m}-1)(U-Cat)\Delta U  + (\tilde{m}-1)at\Delta U +|DU|^2  -Ca \\ \\
                       &\leq& (\tilde{m}-1)\tilde{U}\Delta\tilde{U} + |D\tilde{U}|^2-Ca.
 \end{array}
 $$
 Therefore $\tilde{U}(x,t)$ is a subsolution of \eqref{eqn1} for positive $t$. Comparison with $\tilde{U}(x,t)$ and $w(x,t+\frac{1}{2})$ yields that 
$$
w(\cdot,a^{-1} ) \geq a^{k^\prime} \hbox{ in } B_1(0),
$$
if $k'$ is chosen sufficiently large.

\vspace{10pt}

3. Observe that  $v:= (1+a) w$ satisfies
$$
\begin{array}{lll}
v_t =  &\leq &(m-1)v\Delta v + \frac{1}{(1+a)} |Dv|^2 - Ca(1+a) \\ \\
 &\leq & (m-1)v\Delta v +|Dv|^2 -Ca(|Dv|^2+1)\\ \\
 &\leq & (m-1)v\Delta v +|Dv|^2 -Ca(|Dv|+1)
\end{array}
$$
Hence $v$ is a subsolution of \eqref{perturb}, and further  $v(x,0) = (1-a^2)\tilde{u}(x,0) \leq \tilde{u}(x,0)$. Therefore by the comparison principle (for weak solutions, see \cite{DibF}) we have $v\leq \tilde{u}$. From previous discussions we obtain that $\tilde{u}(\cdot,a^{-1}) \geq a^{k^\prime} \hbox{ in } B_1(0)$, and thus
$$
u(\cdot,a) \geq a^{k^\prime} \hbox{ in } B_a(0),
$$
as desired.

 \end{proof}

 Next lemma establishes a uniform upper bound on $u$ in terms of its $L^1$ norm.
 Here we would address our equation in pressure form, i.e., \eqref{main_eq_density}, to invoke regularity theory for divergence form operators studied in \cite{LSU}.

\begin{lemma}\label{diffusivity}
Let $K$ be a compact subset of $\R^n$ with $u=0$ outside of $K$ for all time.
Then there exists a constant $C>0$ depending on $m>1$, $\sup \rho$ and $\max \Delta \Phi:= M_1$ such that the following holds:

 If 
 $$
 \int_{B_C(0)} \rho(\cdot,t) dx \leq c_0\hbox{ for } t_1\leq t\leq t_2:=t_1+\log (1/c_0).
 $$ 
 then $\rho(\cdot,t_2)\leq Cc_0^{1/n+1}$ in $B_1(0).$
\end{lemma}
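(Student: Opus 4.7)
The plan is to treat the density equation \eqref{main_eq_density}, $\rho_t = \nabla\cdot(m\rho^{m-1}\nabla\rho + \rho\nabla\Phi)$, as a divergence--form parabolic PDE and run a Moser iteration on nested cutoff cylinders inside $B_C\times[t_1,t_2]$ in order to promote the $L^1$ hypothesis into a pointwise bound on $B_1$ at time $t_2$. Since $\rho\le M:=\sup\rho$ a priori and $|\nabla\Phi|,|\Delta\Phi|\le C(M_1)$, the ``coefficients'' of the equation are uniformly bounded in $L^\infty$; the only technical point is the degeneracy of the diffusion coefficient $m\rho^{m-1}$ when $\rho=0$, which is why the statement explicitly invokes \cite{LSU}. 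I would handle this by the standard device of regularizing $\rho\mapsto\rho+\varepsilon$ so that the equation becomes uniformly parabolic, derive the estimates uniform in $\varepsilon$, and pass to the limit via the equicontinuity of Lemma~\ref{equicontinuity}.

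Concretely, fix a spatial cutoff $\eta\in C_c^\infty(B_C)$ with $\eta\equiv 1$ on $B_1$. For $p\ge 2$, I would test the equation against $\eta^2\rho^{p-1}$, integrate by parts, and use $\rho\le M$ with Young's inequality to absorb the drift contribution into lower--order terms. This produces a Caccioppoli--type inequality
\[
\frac{d}{dt}\int\rho^p\eta^2 \;+\; c_p\int\eta^2\,\bigl|\nabla\rho^{(p+m-1)/2}\bigr|^2 \;\le\; C_{p,M,M_1}\int_{\operatorname{supp}\eta}\rho^p.
\]
Combined with the parabolic Sobolev embedding on cylinders, this yields a self--improving $L^p\to L^{p\gamma}$ bound with $\gamma=(n+2)/n>1$; iterating on a shrinking nested sequence of balls $B_C\supset\cdots\supset B_1$ in the standard Moser fashion gives a local $L^\infty$--$L^1$ estimate of the form
\[
\sup_{B_1\times\{t_2\}}\rho \;\le\; C(M,M_1,m,n)\,\|\rho\|_{L^1(B_C\times[t_1,t_2])}.
\]

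To conclude, the $L^1$ hypothesis supplies $\|\rho\|_{L^1(B_C\times[t_1,t_2])}\le c_0(t_2-t_1)=c_0\log(1/c_0)$. Since $c_0^{n/(n+1)}\log(1/c_0)\to 0$ as $c_0\to 0$, we have $c_0\log(1/c_0)\le C' c_0^{1/(n+1)}$ for small $c_0$ (and for $c_0$ bounded away from zero the uniform bound $\rho\le M$ absorbs everything into $C'$), which is the claim. The role of the long time window $\log(1/c_0)$ in the hypothesis is precisely to furnish enough ``regularization time'' for the Moser iteration to convert the small total mass into a pointwise smallness at the final time $t_2$; in particular, the specific exponent $1/(n+1)$ is not sharp but is comfortably accommodated by the logarithmic gain in the time length. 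The main obstacle I anticipate is the degeneracy in the first step: the vanishing of $m\rho^{m-1}$ at the free boundary blocks a direct appeal to \cite{LSU}, so the regularization--and--passage--to--limit procedure has to be executed carefully to keep the Caccioppoli and Moser constants uniform in $\varepsilon$.
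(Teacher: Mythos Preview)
Your handling of the degeneracy is where the argument breaks. Regularizing $\rho\mapsto\rho+\varepsilon$ does make the equation uniformly parabolic, but with ellipticity lower bound $m\varepsilon^{m-1}$; every constant in the Caccioppoli--Sobolev--Moser chain scales inversely in this bound, so the estimates are \emph{not} uniform in $\varepsilon$ and you cannot pass to the limit. Relatedly, the linear bound $\sup\rho\le C\|\rho\|_{L^1}$ you write down is the mean--value inequality for a \emph{non}-degenerate operator; for the porous--medium nonlinearity the Moser iteration closes only with a nonlinear relation whose exponent depends on $m$ and $n$, so your final absorption of $c_0\log(1/c_0)$ into $c_0^{1/(n+1)}$ is not supported by the argument as written. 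Your Caccioppoli inequality is correct, but the coercive term it produces controls $\nabla\rho^{(p+m-1)/2}$, not $\nabla\rho^{p/2}$; the mismatch of exponents is exactly the degeneracy, and it does not disappear by adding $\varepsilon$.

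The paper's proof takes a genuinely different route that also explains the time window precisely. It runs an inductive halving: assuming the current upper bound $\rho\le a$, one lifts the data by $a/10$ so that the resulting $\rho_1$ lies in $[a/10,4a]$; after the rescaling $\tilde\rho(x,t)=a^{-1}\rho_1(ax,t)$ the diffusion coefficient is of unit size and the equation is treated as a \emph{linear} divergence--form problem with frozen coefficients $b(x,t)\in[1/10,4]$, to which the H\"older theory of \cite{LSU} applies directly. A decomposition $\tilde\rho=\tilde\rho_1+\tilde\rho_2$ into a zero--boundary piece (small by the $L^1$ hypothesis combined with H\"older regularity) and a zero--initial--data piece (small by interior decay away from the lateral boundary) yields $\tilde\rho\le 1/2$ after unit time on a slightly smaller ball, i.e.\ $\rho\le a/2$. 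Iterating halves $a$ each unit of time until $a$ reaches the threshold $4Cc_0^{1/(n+1)}$, which takes of order $\log(1/c_0)$ steps: this is the exact reason the hypothesis requires $t_2-t_1=\log(1/c_0)$, not merely ``enough regularization time.'' The idea you are missing is that the regularization parameter must be tied to the \emph{current} oscillation scale $a$ and then removed by rescaling, so that at every stage one is working with a non-degenerate problem with $O(1)$ constants.
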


\begin{proof}

1. We proceed by induction.  Suppose that $u\leq a = c_0^{1/n+1} 2^k$ in $B_C(0)\times [0,t_2]$ where $k>0$ is chosen such that 
\begin{equation}\label{condition}
 4Cc_0^{1/n+1}\leq a\leq 1,
\end{equation}
with $C$ to be determined later. Our goal is to show that 
$$
u\leq a/2\hbox{ in }B_{C(1-a/2)}(0) \times [1,t_2].
$$
 Then the desired result is obtained by iteration, beginning with $a=1$ and continuing until $a$ reaches the lower bound in \eqref{condition}. Note that the total number of iteration for this process, therefore the total time we need for the desired result, is of order $\log(1/c_0)$.

2. Let $\rho_1(x,t)$ solve our equation  \eqref{main_eq_density} with initial data $\rho_0+a/10$ in $\Sigma:=B_C(0) \times [0,\infty)$ with boundary data corresponding to $u+a/10$.
Observe that $f(t) = \frac{11}{10}ae^{t}$ is a supersolution of  \eqref{main_eq_density}, since $\max \Delta \Phi = 1$ due to our normalization.
Thus by the comparison principle $\rho_1$ ranges from $a/10$ to $4a$ for $0\leq t\leq 1$. 
Therefore, treating the diffusion coefficients as \emph{a priori} given, $\tilde{\rho}(x,t):=a^{-1} \rho_1(ax,t)$ solves a quasi-linear equation of divergence form with diffusion coefficient of unit size:
$$
\tilde{\rho}_t = \nabla\cdot(b(x,t) \nabla\tilde{\rho}+\tilde{\rho} \nabla\Phi) ,\hbox{ where } b(x,t) = \tilde{\rho} \in [1/10,4]. 
\leqno(P)
$$

  In particular, we can decompose $\tilde{\rho}:= \tilde{\rho}_1 +\tilde{\rho}_2$ where $\tilde{\rho}_1$ solves (P) (which is linear) with initial data $\rho_0/a+1/10$ and boundary data zero, and $\tilde{\rho}_2$ solves (P) with initial data zero and boundary data $4$. We claim that both of them stays smaller than $1/4$ in $B_{C/a-1/4}(0)\times [0,1]$.

For $\tilde{\rho}_1$ we have
$$
\int_{B_{1/a}(0)} \tilde{\rho}_1 (\cdot,t) dx \leq \int_{B_{1/a}(0)} \tilde{\rho}_1(\cdot,0) dx = c_0a^{-(n+1)}\hbox{ for all } t>0, 
$$
 Due to \eqref{condition} and the H\"{o}lder regularity of $\tilde{\rho}_1$ (see \cite{LSU}) we have 
$$
\tilde{\rho}_1 (\cdot,1) \leq C c_0 a^{-(n+1)} \leq 1/4 \hbox{ in } B_{1/a-1}(0).
$$
if we choose $C>0$ sufficiently large in (\ref{condition}) corresponding to the H\"{o}lder regularity for solutions of (P) for $t\geq 1$.

As for $\tilde{\rho}_2$,  arguments with test functions in the weak formulation of (P) (not very different from the case of the heat equation) in combination with the H\"{o}lder regularity estimates yield that, for sufficiently large $C>0$,
$$   
\tilde{\rho}_2(x,t) \leq 4te^{-3/t} \hbox{ in } B_{C/a-C/4}(0)
$$
with a dimensional constant $C$. In particular $\tilde{\rho}_2(x,1) \leq 1/4 \hbox{ in } B_{C/a-C/4}(0).$

Hence we obtain $\tilde{\rho}(x,1)\leq 1/2$ in $B_{C/a-C/4}(0)$. Scaling back to our original density function $\rho(x,t)$, we conclude that
$$
\rho(\cdot,1)\leq a/2\hbox{ in }B_{C(1-a/4)}(0).
$$
Due to our assumption on $\rho_0$,  one can go through the above argument starting at any time $t=\tau\in [0,t_2]$ instead of $t=0$, obtaining that $\rho(\cdot,t) \leq a/2$ in $B_{C(1-a/2)}(0)\times [1,t_2]$.

5. Repeating the above argument with $a_2:= a/2$ starting at $t=2$, we get 
$$
\rho \leq a_2/2 =a/4\hbox{ in }B_{C(1-a/4-a/8)}(0)\times [3,t_2].
$$
 If we iterate up to of order $\log c_0$ times, then $a$ reaches the lower bound in (\ref{condition}), and we arrive at the desired result.
  
 \end{proof}
 
Next we use the uniform bounds obtained above to investigate the rate of free boundary convergence.

   \begin{thm}\label{FB_conv_rate}
  Let $\Phi$ and $u_{\infty}$ be as in Theorem~\ref{Jose}. Then for $T>0$ sufficiently large, $\Gamma(u)$ is in the $Ke^{-\alpha_2 t}$-neighborhood of $\Gamma(u_{\infty})$. Here $1/K, \alpha_2>0$ is a sufficiently small constant  depending on $m$, $\sup u_0$,  $k_0$, $M_1$, $A:=\min_{\Phi(x)>C_0} |D\Phi|$ and $n$. 
   \end{thm}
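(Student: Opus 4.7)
The target estimate splits into bounding the \emph{over-shoot} — free boundary points of $u$ with $\Phi > C_0$ — and the \emph{under-shoot} — free boundary points of $u$ with $\Phi < C_0$. My plan is to handle both by the same three-step template: convert the exponential $L^1$ convergence of Theorem \ref{Jose} into pointwise bounds via Lemmas \ref{lowerbound} and \ref{diffusivity}, then translate those into free-boundary bounds via the barriers of Section \ref{barriers}.

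\textbf{Step 1 (pointwise decay):} Since $u_\infty \equiv 0$ on $\{\Phi > C_0\}$, Theorem \ref{Jose} gives $\int_{\{\Phi > C_0\}} \rho(\cdot,t)\,dx \leq K e^{-\alpha t}$. For large $t_1$ set $c_0 = Ke^{-\alpha t_1}$ and $t_2 = t_1 + \log(1/c_0) = (1+\alpha)t_1 + O(1)$. For any $x_0$ with $B_C(x_0) \subset \{\Phi > C_0\}$, Lemma \ref{diffusivity} yields $\rho(x,t_2) \leq C c_0^{1/(n+1)} \leq K' e^{-\alpha_1 t_2}$ on $B_1(x_0)$, with $\alpha_1 = \alpha/((n+1)(1+\alpha))$. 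A covering argument (rescaling Lemma \ref{diffusivity} using $|D\Phi| \geq A$) propagates this to all of $\{\Phi \geq C_0 + \eta\}$ for any fixed $\eta > 0$. Symmetrically on the interior, where $u_\infty \geq c\eta$ on $\{\Phi \leq C_0 - \eta\}$, the $L^1$ estimate forces $a^{-n}\int_{B_a(x_0)} u(\cdot,t) \gtrsim \eta$ for $a$ fixed small and $t$ large; Lemma \ref{lowerbound} then gives a pointwise lower bound $u(x,t) \geq c_\eta > 0$ on $\{\Phi \leq C_0 - \eta\}$ for $t \geq T_\eta$.

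\textbf{Step 2 (barrier translation):} For over-shoot, build a supersolution contracting toward $\Omega(u_\infty)$ from the spherical inf-convolution waves $\tilde H$ of Corollary \ref{parabolic_super_barrier}. By Remark \ref{remark} its normal velocity is $V = \omega + \vec b \cdot DH/|DH| + C\alpha$; placing the centers of $\tilde H$ just outside $\Omega(u_\infty)$ with $DH/|DH|$ pointing outward gives $\vec b \cdot DH/|DH| = |D\Phi| \geq A$, so choosing $\omega$ slightly less than $-A$ makes the free boundary of the barrier move inward at speed $\geq A/2$. Dovetailing these barriers over $O(1)$-length time steps, with initial data at each step furnished by Step 1, drives the support of the dominating barrier — hence $\Omega(u(\cdot,t))$ — into a $Ke^{-\alpha_2 t}$-neighborhood of $\overline{\Omega(u_\infty)}$, with $\alpha_2 \sim \min(\alpha_1, A)$. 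For under-shoot, replace $\tilde H$ by the Barenblatt-type subsolutions of Lemma \ref{parabolic_sub_barrier} and argue symmetrically to fatten $\Omega(u(\cdot,t))$ up to the same exponential tolerance.

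\textbf{Main obstacle.} The difficulty will be the outer step: the pointwise bound of Step 1 is only valid at distance $\eta > 0$ outside $\Omega(u_\infty)$, while the free boundary of $u$ may lie at distance $\ll \eta$, so a naive iteration of the contracting barriers stalls near $\Gamma(u_\infty)$. The fix I would use is to peel shells $\{C_0 + \eta_k \leq \Phi \leq C_0 + \eta_{k-1}\}$ with geometrically decreasing $\eta_k \downarrow 0$, re-applying Lemma \ref{diffusivity} on each shell (with constants rescaled using $|D\Phi| \geq A$ and $\max \Delta \Phi \leq M_1$), and balancing the number of shells against the exponential rate $\alpha_1$. The resulting rate $\alpha_2$ then depends only on $m$, $n$, $A$, $k_0$, $M_1$, and $\sup u_0$, as required.
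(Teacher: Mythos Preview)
Your plan follows the same architecture as the paper --- convert the $L^1$ rate of Theorem~\ref{Jose} into pointwise bounds via Lemmas~\ref{lowerbound} and~\ref{diffusivity}, then into free-boundary bounds via the barriers of Corollary~\ref{parabolic_super_barrier} --- and is correct in outline. Two points of execution differ and are worth knowing.

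First, what you call the ``main obstacle'' is dissolved in the paper not by shell-peeling but by a single time-dependent rescaling: one applies Lemma~\ref{diffusivity} to $\tilde u(x,t):=u(b(T)(x-x_0),\,b(T)^2t+T)$ with $b(T)=e^{-\alpha T}$, which places the pointwise upper bound directly on the exponentially thin set $D_{2b(T)}=\{\Phi\ge C_0+2e^{-\alpha T}\}$. Since the final rate $\alpha_2$ is chosen much smaller than $\alpha$, the level set $\{\Phi=C_0+Ke^{-\alpha_2 t}\}$ on which the barrier is run already sits inside this region, so no iterated peeling is needed. The paper then organizes the barrier step as a continuous induction on $T^*:=\sup\{t_0:\Gamma_t(u)\cap D_{Ke^{-\alpha_2 t}}=\emptyset\ \text{for}\ t\le t_0\}$ and reaches a contradiction at $T^*$ with a single $\tilde H$; the contraction comes from the drift term, $V=\omega-|D\Phi|(x_0)+C\tilde a\le -A/2$ with $\omega>0$ small, not from taking $\omega<0$ as you write.

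Second, the inner (under-shoot) bound needs no Barenblatt barrier at all: applying Lemma~\ref{lowerbound} with the radius $a$ chosen to decay like $e^{-ct}$ already gives $u>0$ on $\{\Phi\le C_0-a^k\}$ after time $\sim\log(1/a)$, which is the exponential inner estimate. Your Step~1 essentially contains this once you track $T_\eta\sim\log(1/\eta)$; the subsolution step in your Step~2 is superfluous for this direction.
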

 
 \begin{proof}
0. We first show that $\Gamma_t(u)$ is close to the equilibrium profile from the inside, i.e.,   
\begin{equation}\label{inside}
\{x: d(x,\R^n-\Omega(u_\infty)) \geq Ce^{-Ct}\} \subset \Omega_t(u).
 \end{equation}

After $t=T \geq C\ln a$,
$L^1$-average of $u$ in $B_a(x_0)$ is bigger than $Ca^{k}$ when the center $x_0$ is in $\Omega_a:=\{\Phi\leq C_0-a^k\}$. Hence Lemma~\ref{lowerbound}  yields that 
$u(x_0,T) \geq C_2a^{k^{\prime}}$  with $T = a^{k/2}$. As a result we conclude that $\Gamma(u)$ lies outside of $\Omega_a$ after $t = C\ln a$.

It remains to show that $\Gamma_t(u)$ is also close to the equilibrium profile from the outside.  This will be more involved.

1. Let $D_a:=\{\Phi\geq C_0+a\}$ and let $b(t) := e^{-\alpha t}$.  Due to Theorem~\ref{Jose}, we have  
 $$
 \int_{D_{b(T)}}  \rho(x,t) dx\leq Ke^{-\alpha T}  \hbox{ for } t\geq T.
$$

 Take any point $x_0$ such that $B_{Cb(T)}(x_0)\subset D_{b(T)}$ if $T$ is chosen large. Hence  one can apply Lemma~\ref{diffusivity} to
    $$
    \tilde{u}(x,t):= u(b(T)(x-x_0), b(T)^2t+T)
    $$ with $t_1=0$ and $c_0 = Ke^{-\alpha T}$ to obtain
 $$
 \tilde{u}(\cdot,\alpha T+\beta) \leq K'e^{-\alpha T/(n+1)}\hbox{ in } B_1(0).
 $$
Since $x_0$ was arbitrarily chosen, repeatedly using Lemma~\ref{diffusivity} and scaling back we get
  \begin{equation}\label{upperbound}
  u(x,t) \leq K e^{-\alpha T/(n+1)} \hbox{ in } D_{2b(T)}\times [T_1,\infty), T_0:=(1+ C\alpha)T. 
  \end{equation}
  
  2.  We claim that estimate (\ref{upperbound}) yields that 
 \begin{equation}\label{outside}
 \Gamma_{t}(u) \subset \R^n-D_{a(t)}\hbox{ with } a(t):=Ke^{-\alpha_2 t}\hbox{ for } t\geq T_0
 \end{equation}
 where $\alpha _2=1/K^3$ and $K>\dfrac{2C_0}{A}$, where $C_0$ is given in Corollary~\ref{parabolic_super_barrier}. 
 We prove the claim  by induction. Due to Corollary~\ref{finite_propagation} the claim is true  up to a sufficiently large time $t=T_0$ if $K$ is sufficiently large (in particular $\alpha_2<\alpha$). Next let 
$$
T^*:=\sup\{t_0: (\ref{outside}) \hbox{ holds for } 0\leq t\leq t_0\} \geq T_0.
$$
We want to show that $T^*=\infty$.
Let us choose $x_0\in \partial D_a$, where $a =K e^{-\alpha_2T^*}$. Since $\Phi$ is convex with $|D\Phi|>0$ in $D_a$, there is an exterior ball $B_1(x_1)$ outside of $D_a$ such that $x_0\in \partial B_1(x_1)$. Due to (\ref{upperbound}) and due to the fact $T^*\geq T_0$, we have for $\tilde{a}:= 1/K$, 
$$
u \leq \tilde{a}/K\hbox{ in } \Sigma:=[B_{1+\tilde{a}}(x_1)\cap B_{4\tilde{a}}(x_0)]\times [T^*,\infty).
$$
Let us first make a heuristic argument. Suppose that $x_0\in\Gamma_{T^*}(u)$ (by the definition of $T^*$ such an $x_0$ exists). Then at $x_0$ we have $D\Phi(x_0,T^*)$ pointing in the direction of $x_1-x_0$, which is the inward normal of $\Gamma(u)$ at $(x_0,T^*)$, which is parallel to $-\frac{Du}{|Du|}(x_0,T^*)$. Hence formally the normal velocity of $\Gamma(u)$ at $(x_0,T^*)$ should be
\begin{equation}\label{velocity}
V =(|Du | - |D\Phi|)(x_0,T^*)  \leq O(\tilde{a})-A < -A/2 
\end{equation}
if $K>2/A$.  This way one can conclude that the furthest point of $\Gamma_t(u)$ from $D_0$ shrinks away with the normal velocity less than $-A/2$ at $t=T^*$. 
This would yield a contradiction to the definition of $T^*$ since
$$
-(Ke^{-\alpha_2 t})' = K\alpha_2 a  \leq A/2.
$$
(see Figure \ref{fig_final_thm})
\begin{figure}
\center{\includegraphics[height=4in,width=4in]{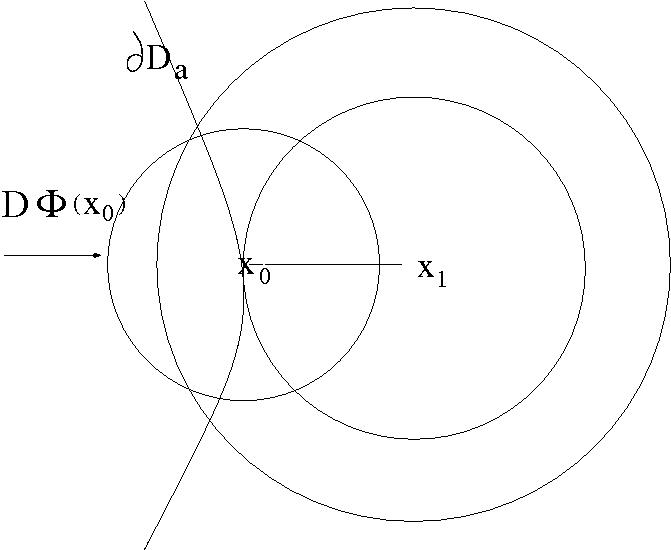}}\caption{The construction of barriers}\label{fig_final_thm}
\end{figure}

The rest of the proof consists of a barrier argument to establish an appropriate version of \eqref{velocity}.

3. Consider $H$ from Lemma \ref{sphere_symm} with $\omega = 8/K$, $A=4/K$, $B=3/4$, $R=1$ and define $\tilde{H}$ as in Lemma \ref{parabolic_super_barrier} with $\alpha=\tilde{a}$ and $(x_0,t_0)=(x_0,T^*)$. Then $\tilde{H}$ is bigger than $u$ on the parabolic boundary of $\Sigma\cap\{ T^*\leq t\leq T^*+\e\}$ for sufficiently small $\e>0$.
 We then compare $\tilde{H}$ with $u$ in $\Sigma\cap[T^*,T^*+\e]$. Observe that the spatial outward normal of $\Gamma(\tilde{H})$ at $(x_0,T^*)$, which is $-D\tilde{H}/{|D\tilde{H}|}$, points toward $x_1-x_0$. Hence the outward normal velocity of $\Gamma(\tilde{H})$  satisfies (see Remark \ref{remark}) 
$$
 V = \omega -|D\Phi|(x_0,T^*)+ C\tilde{a} \leq -A/2 .
$$
if $K>2C/A$. Since $u\leq \tilde{H}$ in $\Sigma\cap [T^*,T^*+\e)$, we conclude that
$$
u(\cdot,T^*+\e)=0\hbox{ in } B_{A\e/2}(x_0).
$$
Since $x_0\in\partial D_a$ is chosen arbitrarily, we conclude that $u(\cdot,T^*+\e)$ vanishes in $A\e/2$-neighborhood of $D_a$, which includes  $D_{a(T+\e)}$ if $K>2/A$ for any $\e>0$.

This contradicts the definition of $T^*$.

\vspace{10pt}
 
 \end{proof}

\subsection{Monotone potential}
Suppose $\Phi$ is  {\it monotone}, i.e., 
$$
|D\Phi| >0 \hbox{ except at } x=x_0 \hbox{ where } \Phi \hbox{ obtains its minimum.} 
 $$
 We are not able to yield quantitative estimates on the rate of free boundary convergence, due to the lack of available $L^1$--estimates. However we state the theorem below to illustrate that if we neglect the rate of convergence, then considerably simpler arguments already yield the free boundary convergence of $u$ as $t\to\infty$,

\begin{thm}\label{monotone}
Suppose $\Phi(x)$ satisfies  $|D\Phi|(x) >0$ except at $x=0$, where $\Phi$ achieves its minimum. Let $u_{\infty}(x):= (C_0-\Phi)_+(x)$ where $C_0$ is chosen such that $\|u_{\infty}\|_{L^{1/m-1}}=\|u_0\|_{L^{1/m-1}}$.
Then $\Gamma_t(u)$ uniformly converges to $\Gamma(u_\infty)$ in the Hausdorff distance, as $t\to\infty$.
\end{thm}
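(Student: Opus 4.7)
The Hausdorff convergence reduces to two set inclusions, each for $t$ larger than some $T(\e)$: (i) $\Omega(u_\infty) \subset (\Omega_t(u))_\e$ and (ii) $\Omega_t(u) \subset (\Omega(u_\infty))_\e$, where $(\,\cdot\,)_\e$ denotes the open $\e$-neighborhood. Inclusion (i) is immediate from Theorem~\ref{equilibrium_solution}: for each $\delta > 0$ the compact set $\{u_\infty \geq \delta\}$ eventually lies in $\Omega_t(u)$ by uniform convergence, and since $u_\infty = (C_0 - \Phi)_+$ is continuous with $|D\Phi| > 0$ near $\{\Phi = C_0\}$ these sets exhaust $\Omega(u_\infty)$ up to an $O(\delta)$-neighborhood of $\Gamma(u_\infty)$ as $\delta \to 0$.

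For (ii), Corollary~\ref{finite_propagation} (via the stationary super-solution $(C - \Phi)_+$ for $C$ large, available since $\Phi$ is monotone) confines $\Omega_t(u)$ to a fixed ball $B_R$ for all time. The mean value theorem applied along integral curves of $-D\Phi/|D\Phi|$ shows that $\{x \in B_R : d(x, \Omega(u_\infty)) \geq \e\} \subset D_\eta := \{\Phi \geq C_0 + \eta\}$ whenever $\eta \leq \e \cdot \inf_{\{\Phi = C_0\} \cap B_R} |D\Phi|$, an infimum which is positive by the monotone hypothesis. So (ii) reduces to: \emph{for every $\eta > 0$ there is a time $T$ such that $\Omega_t(u) \cap D_\eta = \emptyset$ for all $t \geq T$.}

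I would prove this by adapting Step 3 of the proof of Theorem~\ref{FB_conv_rate} in qualitative form. Fix $\eta > 0$; compactness of $\partial D_\eta \cap B_R$ combined with monotonicity yield $A := \inf_{\partial D_\eta \cap B_R} |D\Phi| > 0$ and a uniform lower bound $r_0 > 0$ on the radii of exterior tangent balls $B_{r_0}(x_1) \subset D_\eta^c$ based at every $x_0 \in \partial D_\eta$. Uniform convergence then provides a time $T_0$ with $\sup_{\overline{D_{\eta/2}}} u(\cdot, t) < \delta_0$ for all $t \geq T_0$, where $\delta_0 = \delta_0(\eta, A, \|\Phi\|_{C^2})$ is an amplitude threshold chosen small below. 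Suppose for contradiction that some $t_* \geq T_0$ has $\Gamma_{t_*}(u) \cap D_\eta \neq \emptyset$; select $x_* \in \Gamma_{t_*}(u) \cap \overline{D_\eta}$ maximizing $\Phi$, so that $x_* \in \partial D_\eta$ and the spatial outward normal of $\Omega_{t_*}(u)$ at $x_*$ is aligned with $D\Phi(x_*)/|D\Phi(x_*)|$. Placing the classical free--boundary super-solution $\tilde H$ of Corollary~\ref{parabolic_super_barrier} near $x_*$, built from Lemma~\ref{sphere_symm} via the exterior tangent ball at $x_*$ and scaled so that its parabolic-boundary values exceed $\delta_0$, the comparison principle (Theorem~\ref{cp_strong}) gives $u \leq \tilde H$ in the barrier's cylinder of validity. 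Remark~\ref{remark} then shows that $\Gamma(\tilde H)$ has outward normal velocity at $x_*$ bounded by $\omega + C\alpha - A$, strictly negative for small $\omega$ and $\alpha$ -- exactly the configuration exploited in Step 3 of the proof of Theorem~\ref{FB_conv_rate}. This forces $u = 0$ in a definite spatial neighborhood on the $D_\eta$ side of $x_*$, contradicting $x_*$ being an extremal free--boundary point.

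The main obstacle, in contrast to the convex case, is the absence of any quantitative rate for the decay $\sup_{D_{\eta/2}} u(\cdot, t) \to 0$: we have no $L^1$ estimate analogous to Theorem~\ref{Jose}, hence no pointwise bound analogous to Lemma~\ref{diffusivity}. The argument bypasses this by requiring the amplitude threshold $\delta_0$ to be beaten only at a single (sufficiently late) time, rather than in a self-improving iteration; this is exactly what the qualitative uniform convergence from Theorem~\ref{equilibrium_solution} affords.
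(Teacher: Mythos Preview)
Your overall strategy---uniform convergence for the inside inclusion, a barrier of the type in Corollary~\ref{parabolic_super_barrier} for the outside---matches the paper's. However, the contradiction argument for (ii) has two gaps.

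First, the claim ``$x_*\in\partial D_\eta$'' does not follow. You chose $x_*$ to \emph{maximize} $\Phi$ on $\Gamma_{t_*}(u)\cap\overline{D_\eta}$, so $\Phi(x_*)\geq C_0+\eta$ with strict inequality perfectly possible; $x_*$ lies on the level set $\{\Phi=\Phi(x_*)\}$, which may be far from $\partial D_\eta$. Consequently the exterior--ball radius $r_0$ and the lower bound $A$ you set up on $\partial D_\eta$ are not the ones available at $x_*$. The fix is to establish those bounds uniformly on the compact annulus $\overline{D_{\eta/2}}\cap B_R$, and to work on the level set through $x_*$ itself. In fact, once you observe (via the monotonicity of $\Phi$) that $\overline{\Omega_{t_*}(u)}\subset\{\Phi\leq\Phi(x_*)\}$, you have exactly recovered the paper's setup: choose the smallest $C$ with $\Omega_{t_*}(u)\subset\{\Phi<C\}$ and place the barrier at a point $x_0\in\{\Phi=C\}$.

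Second, and more seriously, the barrier does not produce a contradiction at time $t_*$. The comparison $u\leq\tilde H$ runs \emph{forward} from $t_*$; what it yields (via Remark~\ref{remark}) is that $u(\cdot,t_*+\varepsilon)=0$ in a definite ball around $x_*$, i.e.\ the free boundary \emph{retreats} a fixed amount after $t_*$. This is perfectly consistent with $x_*\in\Gamma_{t_*}(u)$ and contradicts nothing. What you actually obtain is an improvement $\Omega_{t_*+\tau}(u)\subset\{\Phi<\Phi(x_*)-\delta\}$ for some fixed $\tau,\delta>0$ depending only on the data in $\overline{D_{\eta/2}}\cap B_R$; this must then be \emph{iterated} until the containing level drops below $C_0+\eta$. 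That iterative shrinking---from $\{\Phi<C\}$ to $\{\Phi<C_2\}$ with $C_2<C$, repeated---is precisely how the paper organizes the argument. Your closing paragraph correctly identifies that no quantitative rate is available, but the qualitative argument still requires the iteration, not a single--shot contradiction.
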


\begin{proof}
 \medskip
 
0. We first verify that for any compact subset $K$ of $\Omega(u_\infty)$ there exists $T>0$ such that $\Gamma_t(u)$ lies outside of $K$ for $t\geq T$.  This is immediate from the uniform convergence of $u$ to $u_{\infty}$.

1. It remains to establish the statement from  the outside of $\Omega(u_\infty)$. 

Since $\Phi$ is monotone the sets $\{x: \Phi(x) \leq r\}$ starts from a point  (when $r=\min \Phi$) and extends all the way to $\R^n$. In particular, there is only one equilibrium solution $u_\infty$ to which $u$ converges as $t\to\infty$. 
Choose $T>0$. Recall that, by the comparison principle, the set $\bar{\Omega}(u)\cap [0,T]$ is compact in $\R^n\times [0,\infty)$. 
Therefore there exists a largest  $C(T)>0$ such that  $\{x: \Phi(x) < C\}$ contains $\Omega_T(u)$. 
 We will show that $\Omega(u)$, after a while, eventually shrinks away so that after $T_1>T$ we have 
\begin{equation}\label{shrinkage}
\Omega_{T_1}(u) \subset\{x: \Phi(x,t) <C_2\}\hbox{ with } C_2\leq C.
\end{equation}
Let us choose $x_0\in\partial\{\Phi(x)<C\}$. Due to the regularity of $\Phi$ there is a ball $B_r(x_1)\subset\{\Phi(x) \geq C\}$ such that $x_0\in\partial B_r(x_1)$, where $r>0$ is independent of the choice of $x_0$.
Due to \cite{BH}, $u(x,t)$ uniformly converges to zero in $\{x: \Phi(x) \geq C\}$ with $C>C_0$.  Therefore there exists $T_0$ such that
 $$
 u \leq r/2 \min_{\Sigma} |D\Phi|(x)\hbox{ in }  \Sigma\times [T_0,\infty), \hbox{ where }\Sigma:=\{x: C_0<C_2 <\Phi(x) <C+1 \}.
 $$
(It is noted that the condition is to ensure that, formally, $|Du| < |D\Phi|$ at $(x_0,T_0)$).  We now argue as in step 3 of the proof of Theorem~\ref{FB_conv_rate} to construct  a radially symmetric barrier supported in $(B_{2r}(x_1)-B_{ r(t)}(x_1))\times [T_0, T_0+1]$ with $r'(t)<0$,  boundary data zero on $\partial B_{r(t)}(x_1)$ and $2r$ on $\partial B_{2r}(x_1)$, to demonstrate that at $\Gamma(u)$ stays out of $B_{r/2}(x_0)$ after $t=T_0+1$.  Since $x_1$ is an arbitrary point of $\partial\{\Phi(x) <C\}$, we arrive at (\ref{shrinkage}).

 \end{proof}

\section*{\large{Acknowledgments}} 
We thank Jose Carrillo for suggesting this problem and help discussions and communications.  We also thank IPAM for their hospitality during the Kinetic Transport program.   
The first author is supported by the  NSF DMS--0700732 and a Sloan Foundation fellowship.
The second author is supported by the UCLA Dissertation Year Fellowship and under NSF DMS--0805486.
\hspace{16 pt}


\begin{thebibliography}{aaa}

\bibitem{ArB} D. G. Aronson and Ph. Benilan, {\em{ R\'{e}gularit\'{e} des solutions de l'\'{e}quation de millieux dans $\R^n$,}} C. R. Acad. Sci. Paris {\bf 288} (1979), pp.103--105.

\bibitem{ArC} D. G. Aronson and L.A. Caffarelli, {\em{The Initial Trace of a Solution of the Porous Medium Equation,}}
Trans. AMS, {\bf 280}, No. 1 (1983), pp. 351-366

\bibitem{1d}
M.~Bertsch, M.~E.~Gurtin, D.~Hilhorst, and L.~A.~Peletier.
\textit{On Interacting Populations that Disperse to Avoid Crowding: The Effect of a Sedentary Colony.}
J.~Math.~Biology, \textbf{19} (1984), pp. 1--12.

\bibitem{BH}
M.~Bertsch and D.~Hilhorst.
\textit{A Density Dependent Diffusion Equation in Population Dynamics: Stabilization to Equilibrium.} SIAM J.~Math.~Anal.~\textbf{17}, no.~4 (1986), 863--883.

\bibitem{BV}
C.~Br\"andle and J.~L.~Vazquez.
\textit{Viscosity Solutions for Quailinear Degenerate Parabolic Equations of Porous Medium Type.}
Indiana Univ.~Math.~J., \textbf{54}, no.~3 (2005), 817--860.

\bibitem{CV}
L.~Caffarelli and J.~L.~Vazquez.
\textit{Viscosity Solutions for the Porous Medium Equation.}  Differential Equations: La Pietra 1996 (Florence), 13--26, Proc.~Sympos.~Pure Math., \textbf{65}, Amer.~Math.~Soc., Providence, RI (1999).

\bibitem{CIL} 
M. Crandall, H. Ishii and P. Lions, {\em{ User's guide to
viscosity solutions of second order partial differential equations
}}, {\bf 27} (1992), no. 1, 1-67.
\bibitem{DiB}
E.~DiBenedetto.
\textit{Continuity of Weak Solutions to a General Porous Medium Equation.}
Indiana Univ.~Math.~J., \textbf{32} (1983), 83--118.

\bibitem{DibF}
E. DiBenedetto and A. Friedman, {\em{H\"{o}lder estimates for nonlinear degenerate parabolic systems,}} J. Reine Angew. Math., {\bf 357} (1985), pp. 1-22.

\bibitem{DiBGV}
E. DiBenedetto, U. Gianazza and V. Vespri, {\em{Harnack Estimates for QuasiÐ Linear Degenerate Parabolic Differential Equations,}} Acta Math. {\bf 200} (2008), pp. 181-209.

\bibitem{JGJ}
J.~A.~Carrillo, M.~P.~Gualdani, and A.~J\"ungel.
\textit{Convergence of an Entropic Semi--Discretization for Nonlinear Fokker--Planck Equations in $\mathbb R^d$.}
Pub.~Mat. \textbf{52} (2008), 413--433.

\bibitem{JT}
J.~A.~Carrillo and G.~Toscani.
\textit{Asymptotic $L^1$--Decay of Solutions of the Porous Medium Equation to Self--Similarity.}
Indiana University Mathematics Journal, \textbf{49} (2000), 113--141.

\bibitem{jose_et_al}
J.~A.~Carrillo, A.~J\"ungel, P.~A. Markowich, G.~Toscani, A.~ Unterreiter.
\textit{Entropy Dissipation Methods for Degenerate Parabolic Problems and Generalized Sobolev Inequalities.} Monatsh.~Math.~\textbf{133} (2001), 1--82.

\bibitem{Kim}
I.~C.~Kim.
\textit{Uniqueness and Existence Results on Viscosity Solutions of the Hele--Shaw and the Stefan Problems.}
Arch.~Rat.~Mech.~Anal.~\textbf{168} (2003), 299--328.


\bibitem{LSU}
O.~A.~Ladyzenskaja, V.~A.~Solonnikov, N.~N.~Ural'ceva.
\textit{Linear and Quasi--linear Equations of Parabolic Type.}
AMS, Providence, RI (1968).
\bibitem{LV}
K. A. Lee and J. L. Vazquez, \textit{Geometrical properties of solutions of the Porous Medium Equation for large times,} Indiana Mathematics Journal {\bf 52} (2003), pp. 991-1016.

\bibitem{max_principle}
M.~H.~Proter and H.~F.~Weinerger.
\textit{Maximum Principles in Differential Equations.}
Springer--Verlag, New York (1984).

\bibitem{V} J. L.  Vazquez,  {\em{ The porous medium equation: Mathematical theory.}} Oxford Mathematical Monographs. The Clarendon Press, Oxford University Press, Oxford, 2007.


\end{thebibliography}
\end{document}